\theoremstyle{plain}
\newtheorem{thm}{Theorem}
\newtheorem{lem}[thm]{Lemma}
\theoremstyle{definition}
\newtheorem{remark}[thm]{Remark}
\numberwithin{thm}{section}
\numberwithin{equation}{section}
\newcommand{\EQ}[1]{\eqref{#1}}
\newcommand{\THM}[1]{Theorem~\ref{#1}}
\newcounter{hypo}
\DeclareMathOperator{\trace}{trace}
\DeclareMathOperator{\divg}{div}
\newcommand{\R}{\ensuremath{\mathbb{R}}}
\newcommand{\rn}{\R^n}
\newcommand{\iden}{\ensuremath{I_n}}
\newcommand{\ep}{\varepsilon}
\newcommand{\Sy}{\ensuremath{\mathcal{S}_n}}
\newcommand{\puccisub}[2]{\mathcal{P}^-_{#1,#2}}
\newcommand{\Puccisub}[2]{\mathcal{P}^+_{#1,#2}}
\newcommand{\PucciSub}{\Puccisub{\elp}{\Elp}}
\newcommand{\pucciSub}{\puccisub{\elp}{\Elp}}
\newcommand{\pucci}{\pucciSub}
\newcommand{\Pucci}{\PucciSub}
\newcommand{\Elp}{\Lambda}
\newcommand{\elp}{\lambda}
\newcommand{\anom}{{\alpha^*}}
\begin{document}
\title[Sharp {L}iouville results for fully nonlinear equations]{Sharp {L}iouville results for fully nonlinear equations with power-growth nonlinearities}
\author{Scott N. Armstrong}
\address{Department of Mathematics\\ Louisiana State University\\ Baton Rouge, LA 70803.}
\email{armstrong@math.lsu.edu}
\author{Boyan Sirakov}
\address{UFR SEGMI, Universit\'e Paris 10\\
92001 Nanterre Cedex, France \\
and CAMS, EHESS \\
54 bd Raspail \\
75270 Paris Cedex 06, France}
\email{sirakov@ehess.fr}
\date{\today}
\keywords{fully nonlinear elliptic equation, Bellman-Isaacs equation, Liouville theorem}
\subjclass[2000]{Primary 35B53, 35J60.}

\begin{abstract}
We study fully nonlinear elliptic equations such as
\[
F(D^2u) = u^p, \quad p>1,
\]
in $\R^n$ or in exterior domains, where $F$ is any uniformly elliptic, positively homogeneous operator. We show that there exists a critical exponent, depending on the homogeneity of the fundamental solution of $F$, that sharply characterizes the range of $p>1$ for which there exist positive supersolutions or solutions in any exterior domain. Our result generalizes theorems of Bidaut-V\'eron \cite{B} as well as Cutri and Leoni \cite{CL}, who found critical exponents for supersolutions  in the whole space $\R^n$, in case $-F$ is Laplace's operator and Pucci's operator, respectively. The arguments we present are new and rely only on the scaling properties of the equation and the maximum principle.
\end{abstract}

\maketitle

\section{Introduction and main results}

Elliptic equations and systems with power-like zero order terms have been the focus of great attention for many years. A model is the \emph{Emden-Fowler equation}
\begin{equation} \label{lapl}
-\Delta u = u^p, \quad p > 1,
\end{equation}
which has important applications in physics and geometry as well as a rich mathematical structure. In this paper, we study the more general equation
\begin{equation} \label{pdiegen}
F(D^2u) = f(x,u)
\end{equation}
where $F$ is a uniformly elliptic  operator and $f$ has power-like dependence in $u$. In particular, we study the existence of positive solutions and supersolutions of \EQ{pdiegen} in unbounded domains of $\R^n$, $n\ge 2$. The only hypotheses on the nonlinear operator $F$ are uniform ellipticity and positive homogeneity; precisely, we require:
\begin{itemize}
 \item[(H1)]  for some constants $0<\lambda\le \Lambda$ and all real symmetric matrices $M$ and $N$, with $N$ nonnegative definite, we have
    \begin{equation*}
    \lambda\trace(N) \le F(M-N)-F(M) \leq \Lambda\trace(N), \quad \mbox{and}
    \end{equation*}
\item[(H2)]  $F(tM)= tF(M)$ for every $t\geq 0$ and each symmetric matrix $M$.
\end{itemize}
An operator $F$ satisfying (H1)-(H2) is called an \emph{Isaacs operator}. If in addition $F$ is concave or convex, then $F$ is often called a \emph{Hamilton-Jacobi-Bellman} operator. See the references in the next section for more on the the theory and applications of Isaacs operators.

\smallskip

The question we are concerned with has been extensively studied in the special case $F(D^2u) = -\Delta u$ (when (H1) holds with $\lambda=\Lambda=1$, and (H2) holds for all $t\in \R$), by using energy methods and the divergence-form structure of the Laplacian. The following Liouville-type theorem is a particular case of results obtained by Bidaut-V\'eron \cite{B}, and sharply characterizes the range of  $p>1$ for which  there exist positive (super)solutions of \EQ{lapl} in exterior domains, as well as positive supersolutions in the whole space $\R^n$. An \emph{exterior domain} is a domain $\Omega \subset \R^n$ for which $\rn\setminus B_{R}\subseteq\Omega\subseteq \rn\setminus \{0\}$ for some $R>0$, where $B_R$ denotes the open ball of radius $R$ centered at the origin.
\begin{thm}[\cite{B}] \label{BEB}
Denote $2_* := n / (n-2)$ if $n\geq 3$, and $2_* :=\infty$ if $n=2$. Let $p>1$. Then the Emden-Fowler equation \EQ{lapl} has no nontrivial nonnegative weak supersolutions in any exterior domain of $\R^n$, provided that $p \leq 2_*$.
\end{thm}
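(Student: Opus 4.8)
The plan is to reduce \EQ{lapl} to a one-dimensional problem by spherical averaging, and then to derive a contradiction from an ODE blow-up argument organized around the fundamental solution $|x|^{2-n}$ of $-\Delta$. Let $u\ge 0$ be a nontrivial weak supersolution on an exterior domain containing $\{|x|>R\}$; by the strong maximum principle $u>0$ there. Mollifying $u$ and using Jensen's inequality (convexity of $t\mapsto t^p$ together with the fact that a mollifier is a probability density) we may assume $u$ is smooth and positive on some $\{|x|>R\}$; replacing $u$ by its average $\bar u(r)$ over $\partial B_r$ and invoking Jensen once more, we may further assume $u=u(r)$ is radial, smooth, positive and satisfies $-\big(r^{n-1}u'(r)\big)'\ge r^{n-1}u(r)^p$ for $r>R$. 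In particular $r^{n-1}u'(r)$ is non-increasing.

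First I would observe that $u$ cannot be non-decreasing on all of $(R,\infty)$: otherwise $u$ is bounded below by a positive constant near infinity, so integrating the differential inequality forces $-r^{n-1}u'(r)\to+\infty$, contradicting $u'\ge 0$. Hence there is $r_0>\max(R,1)$ with $u'(r_0)<0$, and monotonicity of $r^{n-1}u'$ gives $u'<0$ on $[r_0,\infty)$; thus $u$ decreases on $[r_0,\infty)$ to a limit $\ell\ge 0$. Integrating the inequality from $r_0$ and discarding the negative boundary term gives, for $r\ge r_0$,
\[
-\,r^{n-1}u'(r)\ \ge\ G(r):=\int_{r_0}^{r}s^{n-1}u(s)^p\,ds\ \ge\ 0 .
\]
Dividing by $r^{n-1}$, integrating from $r$ to $\infty$, and using $u(r)-\ell=\int_r^\infty(-u'(s))\,ds$ with $\ell\ge 0$ and the monotonicity of $G$, we obtain
\[
u(r)\ \ge\ \int_r^\infty s^{-(n-1)}G(s)\,ds\ \ge\ G(r)\int_r^\infty s^{-(n-1)}\,ds .
\]
When $n=2$ the last integral diverges, so $u(r)=+\infty$ as soon as $G(r)>0$, an immediate contradiction; so assume $n\ge 3$, where the bound reads $u(r)\ge (n-2)^{-1}r^{2-n}G(r)$.

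Now the critical exponent enters. Differentiating $G$ and inserting this bound,
\[
G'(r)=r^{n-1}u(r)^p\ \ge\ (n-2)^{-p}\,r^{\,n-1-p(n-2)}\,G(r)^p ,
\]
and the hypothesis $p\le 2_*=n/(n-2)$ is precisely the inequality $n-1-p(n-2)\ge -1$, so for $r\ge r_0\ge 1$ we get $G'(r)\ge (n-2)^{-p}\,r^{-1}G(r)^p$. Since $u>0$ we have $G(r_1)>0$ for every $r_1>r_0$, and $G$ is increasing; dividing by $G^p$ and integrating from $r_1$ to $r$ yields
\[
\frac{G(r_1)^{1-p}}{p-1}\ \ge\ \frac{G(r_1)^{1-p}-G(r)^{1-p}}{p-1}\ \ge\ (n-2)^{-p}\,\log\frac{r}{r_1},
\]
whose right-hand side tends to $+\infty$ as $r\to\infty$: the desired contradiction.

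I expect the only subtle point to be the borderline case $p=2_*$: there the naive iteration of decay rates for $u$ stalls exactly at the homogeneity $n-2$ of the fundamental solution and yields no improvement. The virtue of working with the quantity $G$ and the fundamental-solution bound $u\ge (n-2)^{-1}r^{2-n}G$ is that this case requires no separate treatment, since a scalar inequality $y'\ge c\,r^{-1}y^{p}$ with $p>1$ and $y(r_1)>0$ simply has no solution on all of $(r_1,\infty)$. Everything else is routine: the reduction to the smooth radial case via mollification and averaging, and the elementary integrations above, which use nothing beyond the explicit form of the fundamental solution and the sign of $-\Delta u-u^p$.
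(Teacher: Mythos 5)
Your proof is correct, and it follows the classical divergence-form route that the paper explicitly attributes to the earlier literature: mollify and average over spheres (both steps using Jensen's inequality and the convexity of $t\mapsto t^p$) to reduce to the radial inequality $-(r^{n-1}u')'\ge r^{n-1}u^p$, and then derive a blow-up for the accumulated-mass function $G(r)=\int_{r_0}^{r}s^{n-1}u(s)^p\,ds$. Note, however, that the paper does not prove Theorem~\ref{BEB} this way --- it cites it from Bidaut-V\'eron and instead proves the fully nonlinear generalization, Theorem~\ref{mainthm}(i), by an argument designed precisely because spherical averaging is unavailable for general Isaacs operators. There one uses the maximum principle (\LEM{lower}) to show that any positive supersolution dominates a multiple of the (generally non-radial) fundamental solution $\Phi$ of $F$, rescales via $u_\sigma(x)=\sigma^{\beta^*}u(\sigma x)$ to produce positive functions satisfying $F(D^2u_\sigma)\ge c\,\sigma^{(\beta^*-\alpha)(p-1)}u_\sigma$ in the fixed annulus $B_2\setminus B_1$, and then lets $\sigma\to\infty$ to contradict the finiteness of the first half-eigenvalue of $F$ in that annulus. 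Both arguments are organized around the fundamental solution: your bound $u(r)\ge(n-2)^{-1}r^{2-n}G(r)$ plays the role of \LEM{lower}, but is a refinement of it in that the ``constant'' $G(r)$ grows, which is exactly why your scalar inequality $G'\ge c\,r^{-1}G^p$ kills the borderline case $p=2_*$ in one stroke. In contrast, the paper's proof must treat the critical case $\alpha^*=\beta^*$ separately, upgrading the lower bound to $u\ge c|x|^{-\alpha^*}\log|x|$ before running the eigenvalue argument again. The trade-off is scope: your argument is intrinsically tied to the divergence structure of $-\Delta$, while the paper's applies verbatim to any $F$ satisfying (H1)--(H2).
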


Note that it can be checked by a straightforward calculation the functions $v(x):= c_p|x|^{-2/(p-1)}$ and $u(x) := \tilde c_p \left( 1 + |x|^2 \right)^{-1/(p-1)}$ are respectively a solution of \EQ{lapl} in $\R^n \setminus \{ 0 \}$ and a supersolution of \EQ{lapl} in the whole space $\R^n$, provided that $p > 2_*$ and the constants $c_p, \tilde c_p>0$ are chosen appropriately. Thus the previous theorem states that $2_*$ is the \emph{critical exponent} for both the existence of positive solutions or supersolutions of \EQ{lapl} in any exterior domain, and positive supersolutions of \EQ{lapl} in the whole space $\R^n$. Recall that the famous result of Gidas and Spruck \cite{GS} states that the critical exponent for existence of positive solutions of \EQ{lapl} in $\R^n$ is different, namely $2^*:=(n+2)/(n-2)$. The delicate proof of this deep fact relies on special geometric properties and symmetries of the Laplacian.

Theorem \ref{BEB} has been extended to various quasilinear equations in divergence form, like
$
-\Delta_m u = u^p,
$ see \cite{B},
as well as to more general divergence form equations such as
$-\divg(A(Du,u,x)) = 0$,
provided that $A$ possesses the appropriate growth in $u$ and $Du$. For more results in this direction, see \cite{GV,MP,BVP,SZ}.

\medskip

In this paper, we study the corresponding problem for certain fully nonlinear elliptic equations. Consequently, all differential (in)equations appearing here are to be interpreted in the viscosity sense, which is the appropriate notion of weak solution for elliptic equations in nondivergence form. Our model equation is
\begin{equation} \label{model}
F(D^2u) = u^p,
\end{equation}
where $p > 1$ and $F$ satisfies (H1) and (H2). The arguments we give  are completely different from the ones in the above quoted works, since energy methods are obviously inapplicable to the study of \EQ{model}. Instead, our approach  relies on the maximum principle, and makes essential use of the following result we recently obtained with Smart \cite{ASS} on the existence and properties of fundamental solutions of fully nonlinear  equations.

\begin{thm}[\cite{ASS}]\label{fundysol} Assume $F$ satisfies (H1) and (H2).
 The equation
 $F(D^2u)=0$ in $\R^n \setminus \{0\}$
  has a non-constant solution
that is bounded below in $B_1$ and bounded above in $\R^n \setminus B_1$. Moreover, the set of all such solutions is  $\{a\Phi + b\;|\; a>0, b\in \R\}$, where  $\Phi\in C^{1,\delta}_{\mathrm{loc}}(\R^n \setminus \{ 0 \})$ can be chosen to satisfy one of the following homogeneity relations: for all $\sigma>0$,
\begin{equation}\label{scale}
{\Phi}(x) = {\Phi}(\sigma x) + \log \sigma\qquad\mbox{or}\qquad
\left\{\begin{array}{l} {\Phi}(x) =\sigma^{\anom} {\Phi}(\sigma x)\\
\anom\Phi(x)>0\end{array}\right.\quad\mbox{ for }\;  x\in \R^n \setminus \{0\},
\end{equation}
for some unique number $\anom=\anom(F) \in (-1,\infty)\setminus\{0\}$ which depends only on $F$ and $n$ (we set $\anom(F)=0$ in case the first alterantive in \EQ{scale} occurs). We call $\alpha^*(F)$ the \emph{scaling exponent} of $F$, and we call $\Phi$ the \emph{fundamental\ solution} of $F$.
\end{thm}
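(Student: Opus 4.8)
The plan is to rest the whole statement on two facts: \emph{(i)} there exists at least one non-constant solution of $F(D^2u)=0$ in $\Rnpunct$ that is bounded below in $B_1$ and bounded above in $\rn\setminus B_1$, and \emph{(ii)} a rigidity (Liouville-type) statement to the effect that any two such solutions differ by a transformation $u\mapsto au+b$ with $a>0$, $b\in\R$. From \emph{(i)}--\emph{(ii)} the homogeneity relation \EQ{scale}, the description of the solution set, the uniqueness of $\anom$, and the sign condition all follow by exploiting the scaling $u\mapsto u(\sigma\,\cdot)$, which preserves the equation because of (H2) together with $D^2[u(\sigma\,\cdot)]=\sigma^2(D^2u)(\sigma\,\cdot)$. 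The standing tools are: the comparison principle for $F(D^2u)=f(x)$ with continuous $f$, available since (H1) makes $M\mapsto F(M)$ Lipschitz and uniformly elliptic; the Krylov--Safonov Harnack inequality and Caffarelli's interior $C^{1,\delta}$ estimate for uniformly elliptic viscosity equations; and the bracketing $\pucci(D^2u)\le F(D^2u)\le\Pucci(D^2u)$ (note $F(0)=0$ by (H2)), which traps $F$ between the Pucci extremal operators, whose radial fundamental solutions are the explicit powers with exponents $\tfrac{\Elp(n-1)}{\elp}-1$ and $\tfrac{\elp(n-1)}{\Elp}-1$ coming from an elementary ODE.

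For \emph{(i)}, one produces a non-constant solution with a non-removable singularity at the origin as a suitable limit of solutions of $F(D^2u)=0$ on annuli $B_R\setminus\overline{B_\rho}$ with data vanishing on one bounding sphere and prescribed on the other, normalized (using that solutions scale linearly) so that the supremum on a fixed intermediate sphere equals $1$: the Harnack inequality and the $C^{1,\delta}$ estimate give compactness on compact subsets of $\Rnpunct$ as $\rho\downarrow0$, $R\uparrow\infty$, while comparison with the radial fundamental solutions of $\pucci$ and $\Pucci$ keeps the limit from degenerating to a constant and furnishes the one-sided boundedness at $0$ and $\infty$; since the sign of the eventual exponent is not known in advance, one also runs this with the conjugate operator $\Fconj(M):=-F(-M)$, which again satisfies (H1)--(H2), so as to land in the stated class in each case. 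Fix any such non-constant solution of $F(D^2u)=0$ in $\Rnpunct$ and call it $\Phi$; Caffarelli's estimate on compact subsets of $\Rnpunct$, where $\Phi$ is a bounded solution, gives $\Phi\in C^{1,\delta}_{\mathrm{loc}}(\Rnpunct)$. (Alternatively, $\Phi$ may be produced directly in self-similar form $|x|^{-\alpha}g(x/|x|)$ by finding, via the principal-eigenvalue theory for fully nonlinear operators, the unique $\alpha$ for which the uniformly elliptic equation induced by $F$ on $S^{n-1}$ carries a one-signed solution $g$; that route yields the homogeneity of $\Phi$ immediately.)

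The heart of the matter is \emph{(ii)}. Given a non-constant solution $v$ of $F(D^2v)=0$ in $\Rnpunct$ bounded below in $B_1$ and above outside, one proves $v=a\Phi+b$ by a sliding argument: let $a\ge0$ be the largest number for which $v\ge a\Phi+b$ on $\Rnpunct$ for some $b\in\R$; the two boundedness hypotheses, at $x\to0$ and at $x\to\infty$, are exactly what make this number finite and, through a Phragm\'en--Lindel\"of argument adapted to the annular geometry, force $a>0$. By maximality the nonnegative function $w:=v-a\Phi-b$ cannot be bounded away from $0$ near either end; since $\pucci(D^2w)\le0\le\Pucci(D^2w)$ by (H1), $w$ solves a uniformly elliptic equation, so the strong maximum principle and the Hopf lemma force $w\equiv0$. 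Interchanging the roles of $v$ and $\Phi$ shows $a,b$ are uniquely determined, so the solution set is exactly $\{a\Phi+b:a>0,\,b\in\R\}$ (conversely each such function is plainly a non-constant solution in the class). This step I expect to be the main obstacle: $F$ being non-variational, there is no energy to exploit, and the delicate point is the behaviour of $w$ at the two \emph{singular} ends $0$ and $\infty$, rather than at an interior point where the strong maximum principle applies directly.

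It remains to extract the homogeneity and the range of $\anom$. For $\sigma>0$ the function $\Phi(\sigma\,\cdot)$ is again a non-constant solution in the above class (by (H2) and the scaling of the Hessian, with continuity of $\Phi$ on compact annuli and the one-sided bounds giving boundedness near $0$ and $\infty$), so by \emph{(ii)} we have $\Phi(\sigma x)=a(\sigma)\Phi(x)+b(\sigma)$ with $a(\sigma)>0$. Iterating and using uniqueness of $a(\sigma),b(\sigma)$ yields the cocycle identities $a(\sigma\tau)=a(\sigma)a(\tau)$ and $b(\sigma\tau)=a(\sigma)b(\tau)+b(\sigma)$; continuity of $\sigma\mapsto a(\sigma)$ (inherited from $\Phi$) forces $a(\sigma)=\sigma^{-\anom}$ for a unique $\anom\in\R$. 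If $\anom\ne0$, the $b$-cocycle forces $b(\sigma)=c(\sigma^{-\anom}-1)$ for a constant $c$, and replacing $\Phi$ by $\Phi+c$ produces $\Phi(x)=\sigma^{\anom}\Phi(\sigma x)$; then the strong maximum principle together with the one-sided bounds (in a direction where $\Phi|_{\partial B_1}\ne0$, homogeneity would otherwise violate boundedness at the relevant end) forces $\Phi$ to have constant sign, namely $\anom\Phi>0$. If $\anom=0$, the $b$-cocycle is additive, giving $b(\sigma)=-\log\sigma$ after rescaling $\Phi$ by a positive constant, hence $\Phi(x)=\Phi(\sigma x)+\log\sigma$; this is the case in which one sets $\anom(F):=0$. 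Finally, bracketing $\Phi$ between the Pucci extremal operators and comparing with their radial fundamental solutions on annuli gives $\tfrac{\elp(n-1)}{\Elp}-1\le\anom(F)\le\tfrac{\Elp(n-1)}{\elp}-1$, and since $n\ge2$ both endpoints exceed $-1$, so $\anom(F)\in(-1,\infty)$.
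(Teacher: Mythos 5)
This theorem is quoted from \cite{ASS}, and the present paper gives no proof of it, so there is no in-paper argument to compare your proposal against. Your overall decomposition (existence, rigidity, cocycle) is a sensible plan, and the final cocycle step would indeed work once the first two were secured: from $\Phi(\sigma\cdot)=a(\sigma)\Phi+b(\sigma)$, multiplicativity and continuity give $a(\sigma)=\sigma^{-\alpha^*}$, the affine cocycle for $b$ is normalized away, and the sign of $\Phi$ then follows from the homogeneity together with the one-sided bounds. Your parenthetical self-similar route, finding $\alpha$ so that the equation induced on $S^{n-1}$ has a one-signed solution via principal-eigenvalue theory, is in fact much closer to what \cite{ASS} actually does; the ``first prove existence by compactness, then prove rigidity by sliding'' route you lead with is the one that runs into trouble.

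The rigidity step is where the argument does not close, and the gaps are not cosmetic. (a) Finiteness of $a^*:=\sup\{a\ge0:v\ge a\Phi+b\mbox{ for some }b\}$ is \emph{not} a consequence of the one-sided bounds alone: when $\alpha^*>0$, for instance, the binding constraint is at the origin, where $\Phi\to\infty$, and you must show that $v$ cannot blow up faster than every multiple of $\Phi$. The Pucci barriers only bracket $\alpha^*(F)$ between $\alpha^*(\pucci)=\frac{\lambda}{\Lambda}(n-1)-1$ and $\alpha^*(\Pucci)=\frac{\Lambda}{\lambda}(n-1)-1$; they do not control the \emph{relative} growth $v/\Phi$, and establishing this (via Harnack or oscillation-decay estimates on dyadic annuli) is precisely the technical heart of \cite{ASS}, not a byproduct of it. (b) The positivity $a^*>0$ is attributed to an unstated Phragm\'en--Lindel\"of argument. (c) Even granting $a^*\in(0,\infty)$ and the largest $b$ with $w:=v-a^*\Phi-b\ge0$, the conclusion $w\equiv0$ does not follow from the interior strong maximum principle or Hopf's lemma: nothing prevents $\inf w=0$ from being approached only as $|x|\to0$ or $|x|\to\infty$, and these are not boundary points to which Hopf applies, so you still need an asymptotic rigidity statement at the singular ends. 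You flag this difficulty yourself, but the write-up stops exactly where the real work begins. Finally, in the existence step, the claim that Pucci comparison ``keeps the limit from degenerating to a constant'' is a quantitative nondegeneracy assertion, a uniform lower bound on the oscillation of the normalized annulus solutions as $\rho\downarrow0$ and $R\uparrow\infty$, which again requires a Harnack-type estimate rather than pointwise barriers.
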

\begin{remark} Of course, the fundamental solution of the Laplacian  is $\Phi(x) = |x|^{2-n}$, if $n\ge3$, and $\Phi(x)= -\log|x|$, if $n=2$, so the scaling exponent of $-\Delta$ is $\alpha^*(-\Delta) = n-2$. For a general $F$ satisfying (H1) and (H2), the scaling exponent can be any number between
$
\frac{\lambda}{\Lambda}(n-1) - 1$ and $ \frac{\Lambda}{\lambda}(n-1) - 1.$ For more on fundamental solutions and scaling exponents, we refer to \cite{ASS}.
\end{remark}

Equation \EQ{model} has another ``scaling exponent",  given by the interplay between the $1$-homogeneity of the elliptic operator and the $p$-homogeneity of the right-hand side in \EQ{model}.  If $u$ is a (sub/super)solution of \EQ{model} in the exterior domain $\R^n \setminus B_R$, then it is easy to check that the rescaled function $u_\sigma$, defined for each $\sigma > 0$ by
\begin{equation*}
u_\sigma (x):= \sigma^{\beta^*} u(\sigma x), \qquad \beta^*=\beta^*(p) : =\frac{2}{p-1}\,,
\end{equation*}
 is also a (sub/super)solution of \EQ{model} in the domain $\R^n \setminus B_{R/\sigma}$. In order to compare with \EQ{scale}, note that $\Phi_\sigma\equiv\Phi$ when $\alpha^*=\beta^*$.

\medskip

Put briefly, our  main result for \EQ{model} asserts that the answer to the question of whether there exists a positive (super)solution in exterior domains, or a positive supersolution in the whole space $\R^n$, is determined by \emph{which of the numbers $\alpha^*(F)$ and $\beta^*(p)$ is greater.}

\begin{thm} \label{theomodel}
Assume that $F$ satisfies (H1) and (H2), and $p > 1$. Then the generalized Emden-Fowler equation \EQ{model}
\begin{enumerate}
\item has no nontrivial nonnegative supersolution in any exterior domain of~$\R^n$, if $\alpha^*(F) \le \beta^*(p)$;
\item has a positive supersolution in the whole $\R^n$, if  $\alpha^*(F) > \beta^*(p)$.
\item has a positive solution in $\R^n \setminus \{0\}$, if  $\alpha^*(F) > \beta^*(p)$.
\end{enumerate}
\end{thm}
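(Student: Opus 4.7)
The plan is to treat existence ($\alpha^*>\beta^*$) and nonexistence ($\alpha^*\le\beta^*$) separately, with the fundamental solution $\Phi$ from \THM{fundysol} as the key object.

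\textbf{Existence, parts (2) and (3).} Here $\alpha^*>\beta^*>0$, so $\Phi>0$ on $\R^n\setminus\{0\}$ and $\gamma:=\beta^*/\alpha^*\in(0,1)$. The natural candidate on $\R^n\setminus\{0\}$ is $v:=c\Phi^{\gamma}$. The concavity of $t\mapsto t^\gamma$ gives
\[
D^2 v=c\gamma\Phi^{\gamma-1}D^2\Phi-c\gamma(1-\gamma)\Phi^{\gamma-2}\nabla\Phi\otimes\nabla\Phi=:A-N,
\]
with $N\ge 0$, so (H1), the $1$-homogeneity of $F$, and $F(D^2\Phi)=0$ yield
\[
F(D^2 v)\ge F(A)+\lambda\trace(N)=\lambda c\gamma(1-\gamma)\,\Phi^{\gamma-2}|\nabla\Phi|^{2}.
\]
Differentiating $\Phi(x)=\sigma^{\alpha^*}\Phi(\sigma x)$ in $\sigma$ at $\sigma=1$ gives $\nabla\Phi(x)\cdot x=-\alpha^*\Phi(x)$, which combined with $\Phi\simeq|x|^{-\alpha^*}$ produces $|\nabla\Phi|^{2}\gtrsim\Phi^{2(\alpha^*+1)/\alpha^*}$. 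Our choice of $\gamma$ balances exponents, $\gamma-2+2(\alpha^*+1)/\alpha^*=\gamma p$, so $F(D^2v)\ge(C/c^{p-1})v^{p}$, and $v$ is a supersolution of \EQ{model} on $\R^n\setminus\{0\}$ for $c$ small. For (3), I would solve the Dirichlet problem on annuli $B_k\setminus B_{1/k}$ with boundary data $v$, obtaining $u_k$ with $0\le u_k\le v$, use comparison with a compactly supported positive subsolution (e.g.\ $u_0(x):=\eta(1-|x-x_*|^2)_+$ for small $\eta$, valid since (H1) forces $F(-2\eta I)\ge 2\lambda n\eta$) to secure $u_k\ge u_0$, and pass $k\to\infty$ via interior $C^{1,\delta}$ estimates. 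For (2), let $U$ denote the solution from (3): $U$ is locally bounded on $\R^n\setminus\{0\}$ and decays like $|x|^{-\beta^*}$ at infinity. For any $x_0\ne 0$, $W(x):=\min(U(x),U(x-x_0))$ is a locally bounded positive viscosity supersolution of \EQ{model} on $\R^n\setminus\{0,x_0\}$, and its two isolated singularities are removable (its lower semicontinuous envelope is a viscosity supersolution on $\R^n$), providing the positive supersolution in (2).

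\textbf{Nonexistence, part (1).} Let $u>0$ be a supersolution in $\R^n\setminus\overline{B_R}$. The first step is the \emph{baseline bound} $u(x)\ge c\Phi(x)$ for $|x|\ge 2R$: compare $u$ with $c(\Phi-\epsilon)$ on the annulus $\{2R<|x|<R_\epsilon\}$, where $R_\epsilon$ is the radius on which $\Phi=\epsilon$; this comparison function is an $F$-solution by $1$-homogeneity, vanishes on $\partial B_{R_\epsilon}$ (where $u>0$), and is dominated by $u$ on $\partial B_{2R}$ for $c$ chosen small; the viscosity comparison principle followed by $\epsilon\downarrow 0$ yields the bound. (Minor adjustments, using $\Phi$ plus a constant or the logarithmic variant, handle $\alpha^*\le 0$.) The second step is a \emph{bootstrap}: from $F(D^2u)\ge u^{p}\ge c^{p}\Phi^{p}\simeq|x|^{-p\alpha^*}$, comparing $u$ with a solution of $F(D^2 w)\simeq|x|^{-p\alpha^*}$ that vanishes at infinity, which by scaling behaves like $|x|^{-(p\alpha^*-2)}$, improves the decay exponent from $\alpha_0:=\alpha^*$ to $\alpha_1:=p\alpha_0-2$. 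Iterating gives $\alpha_k-\beta^*=p^{k}(\alpha_0-\beta^*)$; in the strict case $\alpha^*<\beta^*$ the $\alpha_k$ diverge to $-\infty$, and a careful tracking of the multiplicative constants $c_k$ shows that at any fixed $|x_0|$ sufficiently large, $u(x_0)\ge c_k|x_0|^{-\alpha_k}\to+\infty$, contradicting $u(x_0)<\infty$. The borderline case $\alpha^*=\beta^*$ freezes the exponent but produces logarithmic improvements $u\ge c_k\Phi(\log|x|)^{k}$, which again contradict pointwise finiteness.

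\textbf{Main obstacle.} The crux is the bootstrap in (1): the constants produced by each comparison must be tracked uniformly so that the cumulative pointwise bound actually diverges, and the borderline case $\alpha^*=\beta^*$ needs its own logarithmic iteration. A secondary technical point, needed in (2)--(3), is the lower bound $|\nabla\Phi|\gtrsim\Phi^{(\alpha^*+1)/\alpha^*}$; although $\Phi$ is not a priori radial, the exact scaling from \THM{fundysol} combined with the strong minimum principle (which prevents $\Phi$ from vanishing on spheres) makes this a standard consequence.
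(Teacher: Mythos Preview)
Your supersolution $v=c\Phi^{\beta^*/\alpha^*}$ on $\R^n\setminus\{0\}$ in (2)--(3) is exactly the paper's construction, and your gradient bound $|\nabla\Phi|\ge\alpha^*|x|^{-1}\Phi$ follows directly from the Euler relation $x\cdot\nabla\Phi=-\alpha^*\Phi$ (no minimum principle needed). Beyond this, however, your proposal diverges from the paper and contains two genuine gaps.

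\textbf{Part (3): the subsolution is wrong.} Your function $u_0(x)=\eta(1-|x-x_*|^2)_+$ is \emph{not} a subsolution of $F(D^2u)=u^p$. Where $u_0>0$ you have $F(D^2u_0)=F(-2\eta I)\ge 2\lambda n\eta>0$, while $u_0^p\to 0$ at the boundary of the support; so the subsolution inequality $F(D^2u_0)\le u_0^p$ fails there regardless of how small $\eta$ is (small $\eta$ makes it worse, since $p>1$). In fact no compactly supported nonnegative $C^2$ function can be a subsolution of this equation, for the same reason. Without a lower barrier you cannot prevent the annular solutions $u_k$ from collapsing to $0$, so the scheme breaks. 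The paper takes a completely different route: it works in the cone $H_{\beta^*}$ of nonnegative $(-\beta^*)$-homogeneous functions and applies Krasnoselskii's fixed-point theorem to the map $v\mapsto$ (solution of $F(D^2u)=v^p$), verifying the inner/outer radius conditions via comparison and a blow-up argument. Since your (2) is deduced from (3), this gap propagates; the paper instead obtains (2) directly by patching $\Phi^{\beta^*/\alpha^*}$ with the solution of $F(D^2w)=a$ on $B_1$ (small $a$) via a minimum.

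\textbf{Part (1): the bootstrap is only a sketch, and the description is inaccurate.} Your recursion $\alpha_{k+1}=p\alpha_k-2$ sends $\alpha_k\to-\infty$, so after finitely many steps $\alpha_{k+1}\le 0$ and there is no ``solution of $F(D^2w)\simeq|x|^{-p\alpha_k}$ that vanishes at infinity'' to compare with; the comparison must then switch to local barriers on balls $B_R(x_0)\subset\R^n\setminus B_{R_1}$, which you do not mention. More seriously, you assert but do not show that the constants $c_k$ can be tracked so that $c_k|x_0|^{-\alpha_k}\to\infty$; each comparison step introduces a multiplicative loss depending on the exponent, and without control this product could kill the divergence. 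The paper avoids the bootstrap entirely: it rescales $u_\sigma(x)=\sigma^{\beta^*}u(\sigma x)$ to the fixed annulus $B_2\setminus B_1$, observes from the baseline bound that $u_\sigma\ge c\sigma^{\beta^*-\alpha}$ there, and hence $F(D^2u_\sigma)\ge c\sigma^{(\beta^*-\alpha)(p-1)}u_\sigma$, contradicting the finiteness of the first half-eigenvalue $\lambda_1^+(F,B_2\setminus B_1)$ as $\sigma\to\infty$. The critical case $\alpha^*=\beta^*$ is handled by a single logarithmic correction (comparing with $\Phi\log|x|$), not an infinite iteration.
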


This result extends Theorem \ref{BEB} to arbitrary Isaacs operators,  and provides a new perspective on this theorem by embedding the Laplacian in the family of positively homogeneous, uniformly elliptic operators. From our point of view, the condition $p\le n/(n-2)$ is better written as $n-2\leq 2/(p-1)$, which emphasizes the competition between the homogeneity of the fundamental solution of the Laplacian and the scaling exponent $\beta^*(p) = 2/(p-1)$ for equation \EQ{lapl}. Of course, we may also write the inequality $\alpha^*(F) \leq \beta^*(p)$ in terms of $p$ as
\begin{equation} \label{critex}
p \leq 2_*(F):= \begin{cases}
\frac{\alpha^*(F)+2}{\alpha^*(F)} & \mbox{if} \quad \alpha^*(F) > 0,\\
\infty & \mbox{if} \quad \alpha^*(F) \leq 0.
\end{cases}
\end{equation}

\medskip

To our knowledge, there are no previous results concerning the nonexistence of positive solutions of fully nonlinear equations and inequalities in exterior domains. Theorem \ref{theomodel} implies \EQ{model} has no positive {\it singular} supersolutions, with singularities contained in a bounded set, provided that $ \alpha^*\le \beta^*$.

In the special case that \EQ{model} is posed in the whole space $\R^n$, and $F$ is a Pucci extremal operator (see Section \ref{preliminaries} for a definition of these operators), problem \EQ{model} was studied and Theorem \ref{theomodel} was proved by Cutri and Leoni \cite{CL}. Their result was extended to a class of rotationally invariant operators\footnote{\emph{rotationally invariant} means that $F(M)$ depends only on the eigenvalues of $M$.} in a recent paper by Felmer and Quaas \cite{FQ}. Note that for rotationally invariant operators the existence statements (ii) and (iii) in Theorem \ref{theomodel} are trivial. For such operators, just as for \EQ{lapl}, it can be checked by a direct computation that if $\alpha^* > \beta^*$, then the functions $v(x):= c|x|^{-2/(p-1)}$ and $u(x) := \tilde c \left( 1 + |x|^2 \right)^{-1/(p-1)}$ are respectively a solution of \EQ{model} in $\R^n \setminus \{ 0 \}$ and a supersolution of \EQ{model} in the whole space $\R^n$, for suitably chosen constants $\tilde c, c > 0$. The proofs of the  nonexistence statements in \cite{CL} and \cite{FQ} depend heavily on the rotational invariance of $F$, in particular on the existence of radial fundamental solutions for $F$.

\smallskip

Theorem~\ref{theomodel} generalizes these results first by dropping the assumption of rotational invariance and considering an arbitrary fully nonlinear operator satisfying (H1) and (H2), and second by requiring only that the equation hold in an exterior domain, which yields a Liouville statement for singular solutions. Our proof of Theorem \ref{theomodel} is (necessarily) accomplished through a different argument than the one given in both \cite{CL} and \cite{FQ}. The proof of part (i) makes use of the scaling properties of the equation and of the maximum principle to compare a positive supersolution of \EQ{model} with the fundamental solution $\Phi$ of $F$. To prove part (ii), we show that the ellipticity estimates imply that some power of the fundamental solution of $F$ is a supersolution of \EQ{model}, which permits us to construct a solution of \EQ{model} in the whole space by a truncation-type argument.

Finally, Theorem \ref{theomodel}(iii) is proved with the help of Krasnoselskii degree theory (see Theorem \ref{fixedpt} in Section \ref{proofs}), a tool which became popular in elliptic PDE with the well-known works of Amann (see e.g. \cite{Am}). An essential step in the application of degree theoretic results lies in estabishing {\it a priori} bounds. The classical blow-up argument, introduced by Gidas and Spruck \cite{GS2} and de Figueiredo, Lions and Nussbaum \cite{DLN}, has been employed many times in the last thirty years to obtain a priori bounds and hence existence results for nonlinear problems in bounded domains. In the fully nonlinear setting this approach was first used in \cite{QS1}.

It is less typical for a priori estimates and degree theory to be used to establish existence results in \emph{unbounded domains}, and their application hides some specificities. In particular, we deduce an a priori estimate by an argument which is different than the one typically used in the literature (for example in \cite{DLN,GS2}). In fact, we will prove the following result, which implies Theorem \ref{theomodel}(iii) and deserves to be stated separately.

\begin{thm}\label{theoexist} Assume $\beta^*(p) < \alpha^*(F)$. Then at least one of the following holds:
\begin{enumerate}
\item equation \EQ{model} has a bounded positive solution in the whole space $\R^n$, or
\item equation \EQ{model} has a positive solution $u$ in the domain $\R^n \setminus \{ 0 \}$ which is $\left( -\beta^*(p)\right)$-homogeneous, that is, $u_\sigma \equiv u$ for all $\sigma > 0$.
\end{enumerate}
\end{thm}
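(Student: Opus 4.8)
The plan is to set up a degree-theoretic fixed point argument on an appropriate cone, with the dichotomy (i)/(ii) emerging from whether or not an a priori bound holds. First I would work on the bounded exterior domains $A_R := B_R \setminus \overline{B_{1}}$ (or annuli $B_R \setminus \overline{B_{1/R}}$) and consider the Dirichlet problem $F(D^2 u) = u^p$ with boundary data forcing positivity, e.g.\ $u = \epsilon \Phi$-type barriers on the inner sphere and $u = 0$ on the outer one; more precisely I expect one works with the boundary value problem on an annulus and then lets the annulus exhaust $\Rnpunct$. Using \THM{fundysol}, the function $\Phi$ (which is $(-\anom)$-homogeneous up to additive constants, with $\anom > \beta^*$) and suitable powers of it furnish ordered sub- and supersolutions: since $\beta^* < \anom$, the computation sketched in the introduction shows that (a constant multiple of) $\Phi^{\beta^*/\anom}$ — equivalently a function behaving like $|x|^{-\beta^*}$ — is a supersolution of \EQ{model}, while small multiples of $\Phi$ itself are subsolutions. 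This gives an invariant order interval on which the solution operator $u \mapsto (-F)^{-1}(u^p)$ (via the existence/uniqueness theory for the Dirichlet problem for Isaacs operators, Ishii--Lions / Caffarelli--Crandall--Kocan--Święch regularity) is well defined, compact, and has a fixed point of nonzero degree.

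The heart of the matter is the \emph{a priori estimate}, and here I would depart from the classical Gidas--Spruck / de~Figueiredo--Lions--Nussbaum blow-up near a point. Instead, the scaling symmetry $u_\sigma(x) = \sigma^{\beta^*} u(\sigma x)$ of \EQ{model} in exterior domains is the right invariance to exploit: given a sequence of solutions $u_k$ on larger and larger annuli whose sup norms (suitably weighted, say $\sup |x|^{\beta^*} u_k$) blow up, I would rescale each $u_k$ by the appropriate $\sigma_k$ so that the weighted sup norm is normalized to $1$ and attained at $|x| = 1$. Interior fully nonlinear estimates (Krylov--Safonov Harnack, $C^{1,\delta}$ and $C^{2,\delta}$ for Isaacs/HJB operators) then give local compactness, and the limit $u_\infty$ is a positive solution of $F(D^2 u_\infty) = u_\infty^p$ on all of $\Rnpunct$ with $\sup|x|^{\beta^*} u_\infty = 1$ attained at $|x|=1$ — in particular $u_\infty$ is \emph{globally} bounded in the weighted sense and cannot be identically zero. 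Comparing $u_\infty$ with the fundamental solution $\Phi$ via the maximum principle (as in the proof of part~(i), using $\anom > \beta^*$) forces $u_\infty$ to be exactly $(-\beta^*)$-homogeneous: the weighted sup norm being constant in $\sigma$ together with the strict ordering against $\Phi$ pins down $u_\infty = (u_\infty)_\sigma$. This is precisely alternative (ii). If on the other hand no blow-up occurs, the uniform a priori bound lets the degree argument produce a fixed point on the exhausting annuli, and a diagonal/compactness passage to the limit yields a bounded positive solution on $\R^n$ — alternative (i).

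The main obstacle I anticipate is making the a priori estimate genuinely \emph{work in the unbounded setting}: one must rule out the limit profile escaping to the singularity at the origin or to infinity, i.e.\ control where the weighted sup norm is attained, and one must verify that the limit equation is still $F(D^2 u) = u^p$ rather than a Pucci-extremal relaxation — this requires the stability of viscosity solutions under the $C^0_{\mathrm{loc}}$ convergence coming from the interior estimates, which is standard but must be checked with the $x$-independent $F$. A secondary technical point is organizing the degree computation: one needs the solution operator to be defined on a ball (or cone) in a suitable Banach space of functions on the growing annuli with the weighted norm, to check that the relevant homotopy has no fixed points on the boundary (using the sub/supersolution barriers built from $\Phi$), and to conclude nonvanishing of the degree, for which I would invoke a Krasnoselskii-type cone theorem in the form stated as \THM{fixedpt}. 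Once these pieces are in place, Theorem~\ref{theomodel}(iii) follows immediately, since in case~(i) one already has a solution in $\R^n \supset \Rnpunct$, and in case~(ii) the homogeneous solution $u$ lives in $\Rnpunct$.
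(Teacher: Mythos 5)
Your overall architecture (Krasnoselskii cone fixed-point + a scaling-based a priori bound that dichotomizes into (i) and (ii)) is in the right spirit, but the decisive structural idea of the paper is missing, and as written your a priori estimate and your dichotomy both break down.

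The paper does not work on an exhausting family of annuli. It works \emph{directly in the cone $H_\beta$ of $(-\beta^*)$-homogeneous nonnegative functions on $\Rnpunct$}, endowed with the norm $\|u\|_{X_\beta}=\max_{\partial B_1}|u|$. Because $v\in H_\beta$ implies $v^p\in H_{\beta+2}$ and because \cite[Lemma 3.8]{ASS} (available since $0<\beta^*<\anom$) gives a unique $(-\beta^*)$-homogeneous solution of $F(D^2u)=v^p$ in $\Rnpunct$, the solution map $A$ sends $H_\beta$ into itself. Any Krasnoselskii fixed point is therefore \emph{automatically} $(-\beta^*)$-homogeneous; one never has to prove homogeneity of a limit. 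This is exactly the point where your proposal has a genuine gap: you want to deduce $u_\infty=(u_\infty)_\sigma$ from a pinching $c|x|^{-\beta^*}\le u_\infty\le C|x|^{-\beta^*}$ plus comparison against $\Phi$, but two-sided bounds of that type do not force exact homogeneity of a positive solution, and you give no sliding/uniqueness argument to close it.

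Two further concrete problems. First, your rescaling-normalization is inconsistent with the scaling symmetry you invoke: under $u_\sigma(x)=\sigma^{\beta^*}u(\sigma x)$ the quantity $\sup_x |x|^{\beta^*}u(x)$ is \emph{invariant}, so you cannot both use this rescaling to move the maximum to $|x|=1$ and simultaneously ``normalize the weighted sup to $1$'' when it is blowing up; dividing by the sup instead degenerates the equation to $F(D^2u_\infty)=0$. The paper sidesteps this: one takes homogeneous $u_k$ with $\|u_k\|_{X_\beta}=M_k\to\infty$, picks $y_k\in\partial B_{M_k^{1/\beta}}$ where $u_k(y_k)=1$, notes by homogeneity that $u_k\le2$ on a ball of radius $\sim M_k^{1/\beta}\to\infty$ around $y_k$, \emph{translates} to $y_k$, and passes to the limit using H\"older estimates and stability (\cite[Theorem 3.8]{CCKS}) — no division by $M_k$. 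Second, your dichotomy is inverted: the blow-up limit in this argument is a bounded positive solution of \EQ{model} in \emph{all of $\R^n$} — alternative (i) — and the role of the a priori estimate is to hold precisely when (i) fails, which then lets Krasnoselskii produce the homogeneous solution in $\Rnpunct$ that is alternative (ii). In your sketch the blow-up is supposed to output (ii), which is the wrong end of the implication, and the no-blow-up branch is claimed to give a solution on $\R^n$ by exhaustion of annuli, which does not follow (the exhaustion would at best give a solution on $\Rnpunct$, with unclear behavior at the origin). You should look at how $H_\beta$, $X_\beta$, and \cite[Lemma 3.8]{ASS} are used: they replace the annulus exhaustion entirely and make the whole argument much cleaner.
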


In Section \ref{proofs} we give an extended discussion of our arguments and compare them to those of previous papers on the subject. Let us mention here another advantage of our approach, which is that it easily adapts to more general nonlinearities $f = f(x,u)$ as in \EQ{pdiegen}. An extension of our results to equation \EQ{pdiegen} is given in Theorem~\ref{mainthm}, which roughly states that analogous conclusions as in Theorem \ref{theomodel} (i)-(ii) hold for \EQ{pdiegen}, provided $f$ is positive, behaves like $|x|^{-\gamma}u^p$ for large $|x|$ and small $u$, and satisfies a mild global hypothesis.

It remains an interesting open question whether the result of Gidas and Spruck \cite{GS} can be extended to fully nonlinear equations. For more details on this question, we refer to Felmer and Quaas \cite{FQ2}, who used ODE techniques to study the existence and nonexistence of radial solutions in $\R^n$ for rotationally invariant operators.

In the next section, we very briefly describe some notation and results from the theory of viscosity solutions of fully nonlinear equations. Our main result is proved in Section \ref{proofs}.

\section{Some notations and results on viscosity solutions of elliptic PDE}\label{preliminaries}

This short section is meant for readers who are not familiar with the theory of fully nonlinear equations and the concept of viscosity solutions.

We denote the set of $n$-by-$n$ real symmetric matrices  by $\Sy$, and $\iden\in\Sy$ is the identity matrix.  If $x,y \in \R^n$, we denote by $x\otimes y$ the symmetric matrix with entries $\frac{1}{2} ( x_iy_j + x_jy_i)$. For $M,N \in \Sy$, we write $M\geq N$ if $M-N$ has nonnegative eigenvalues. The Pucci extremal operators are defined  by
\begin{equation*}
\PucciSub(M) := \sup_{A\in \llbracket\elp,\Elp\rrbracket} \left[ - \trace(AM) \right] \quad \mbox{and} \quad \pucciSub (M) := \inf_{A\in \llbracket\elp,\Elp\rrbracket} \left[ - \trace(AM) \right],
\end{equation*}
for each $M\in\Sy$ and $0 < \lambda \leq \Lambda$, where $\llbracket\lambda,\Lambda\rrbracket \subseteq \Sy$ is the subset of $\Sy$ consisting of the matrices $A$ for which $\lambda \iden \leq A \leq \Lambda \iden$.  The following equivalent definition of the Pucci extremal operators is often more convenient for calculations:
\begin{equation}\label{puccinice}
\Pucci(M) = -\lambda \sum_{\mu_j > 0} \mu_j - \Lambda \sum_{\mu_j < 0} \mu_j \quad \mbox{and} \quad \pucci(M) = -\Lambda \sum_{\mu_j > 0} \mu_j - \lambda \sum_{\mu_j < 0} \mu_j,
\end{equation}
where $\mu_1, \ldots, \mu_n$ are the eigenvalues of $M$. See Caffarelli and Cabre \cite{CC} for more on these operators (to avoid confusion with the notations in \cite{CC}, note that $\Pucci(M) = -{\mathcal{M}}^-(M,\lambda,\Lambda)$ and $\pucci(M) = -{\mathcal{M}}^+(M,\lambda,\Lambda)$, where $\mathcal{M}^\pm$ are as in \cite{CC}).

An equivalent way of writing (H1) is
\begin{itemize}
\item[(H1)] there exist $0 < \lambda \leq \Lambda$ such that for every $M,N \in \Sy$, \[ \puccisub{\lambda}{\Lambda}(M-N) \leq F(M) - F(N) \leq \Puccisub{\lambda}{\Lambda}(M-N).\]
\end{itemize}
Observe that (H1) and (H2) are satisfied for both $F=\pucci$ and $F=\Pucci$, and these hypotheses imply $\pucci(M) \leq F(M) \leq \Pucci(M)$ for each $M\in\Sy$.

\medskip

Furthermore, an equivalent way of stating (H1) and (H2) is to assume $F$ is of the form
\begin{equation}\label{flip}
\displaystyle F(D^2u)=\sup_{\alpha\in \mathcal{A}}\inf_{\beta\in \mathcal{B}} \left(-a^{\alpha,\beta}_{ij}\partial_{ij}u \right) \quad \mbox{ or }\quad F(D^2u)=\inf_{\alpha\in \mathcal{A}}\sup_{\beta\in \mathcal{B}} \left(-a^{\alpha,\beta}_{ij}\partial_{ij}u\right),
\end{equation}
 where $\alpha,\beta$ are indices that belong to some sets $\mathcal{A}$ and $\mathcal{B}$, and the symmetric matrices $A^{\alpha,\beta}=(a^{\alpha,\beta}_{ij})$ satisfy the inequality $\lambda I\le A^{\alpha,\beta}\le\Lambda I$. This is the form of general {\it Isaacs} operators, which are fundamental in the theory of two-player zero-sum stochastic differential games. In the particular case $\mathrm{card}(\mathcal{B})=1$, the operator $F$ in \EQ{flip} is called a Hamilton-Jacobi-Bellman operator -- these have many uses in applied mathematics, and arise in the theory of stochastic optimal control. We refer to Cabre \cite{Cab} as well as Fleming and Soner \cite{FM} for references and more on  Hamilton-Jacobi-Bellman and Isaacs equations.

Suppose that $\Omega$ is an open subset of $\R^n$, $F$ satisfies (H1), and $f\in C(\Omega)$.  A continuous function $u\in C(\Omega)$ is a \emph{viscosity subsolution (resp. supersolution)} of the equation
\begin{equation}\label{defviscsol}
F(D^2u) = f(x) \quad \mbox{in} \ \Omega,
\end{equation}
if, for every point $x_0\in \Omega$ and \emph{test function} $\varphi\in C^2(\Omega)$ such that $x \mapsto u(x) - \varphi(x)$ has a local maximum (resp. minimum) at $x_0$, we have
\begin{equation*}
F(D^2\varphi(x_0)) \leq \ (\mathrm{resp}.\,\geq)\ f(x_0).
\end{equation*}
We say that $u$ is a \emph{viscosity solution} of \EQ{defviscsol} if it is both a viscosity subsolution and supersolution of \EQ{defviscsol}.

Below we mention some standard results from the theory of viscosity solutions which will be used in this article. All differential operators $F$, $G$, $H$, appearing below are assumed to satisfy only (H1).

\begin{itemize}

\item \emph{Maximum principle and strong maximum principle}
(\cite[Proposition 4.9, Theorem 5.3]{CC} and Theorem 3.3 in \cite{UG}
together with the remarks in Example 3.6 and section 5.C in that
paper). Suppose that $\Omega$ is bounded and $u,v\in C(\bar \Omega)$,
$f\in C(\Omega)$ satisfy $F(D^2u) \leq f \leq F(D^2v)$ in $\Omega$ and
$u\leq v$ on $\partial \Omega$ . Then $u \leq v$ in $\Omega$. If
$u(x_0) = v(x_0)$ at some point $x_0\in \Omega$, then $u\equiv v$ in
$\Omega$.

\item \emph{Transitivity of inequalities in the viscosity sense}
(\cite[Theorem 5.3]{CC} and \cite[Lemma 3.6]{A}). Assume  $F(M)+G(N)
\geq H(M+N)$ for each $M,N \in \Sy$. If $u,v,f, g\in C(\Omega)$ are
such that   $F(D^2u) \leq f$ and $G(D^2v) \leq g$ in $\Omega$, then
the function $w:=u+v$ satisfies $H(D^2w) \leq f+g$ in $\Omega$.

\item \emph{Local H\"older estimates} (\cite[Theorem 4.10]{CC}).
Suppose that $u,f\in C(\Omega)$ satisfy the inequalities $\pucci(D^2u)
\leq |f|$ and $\Pucci(D^2u) \geq -|f|$ in $\Omega$. Then there is a
constant $0 < \gamma < 1$, $\gamma=\gamma(n,\lambda,\Lambda)$,  such
that for each compact subset $K \subseteq \Omega$ we can find
$C=C(n,\Elp,\elp,K,\Omega)$ for which
\begin{equation*}
\| u \|_{C^\gamma(K)} \leq C \left( \| u \|_{L^\infty(\Omega)} + \| f
\|_{L^n(\Omega)} \right).
\end{equation*}

\item \emph{The infimum of a family of supersolutions which is
uniformly bounded below is a supersolution.} (\cite[Proposition
2.7]{CC}).

\end{itemize}

The book \cite{CC} is a nice introduction to the theory of viscosity solutions of fully nonlinear, uniformly elliptic equations. See also Crandall, Ishii, and Lions \cite{UG}.

\section{Further remarks and proofs}\label{proofs}
\subsection{Discussion and more general results}

To put our main result and its proof in a proper context, we begin this section with a discussion. First, all earlier results concerning linear and quasilinear operators in divergence form use the weak formulation of the equation in terms of integrals, a feature which operators in non-divergence form do not have. For example, a simple way to prove a nonexistence result for \EQ{lapl} is to replace $u$ by its spherical mean, which is a supersolution too, by Jensen's inequality. It is then enough to study radial supersolutions, which can be reduced to an ODE problem (see for example Guedda and V\'eron \cite{GV}).

 A different approach is required in the nondivergence setting. It was noticed by Labutin \cite{L} as well as in \cite{CL} and \cite{FQ} that for certain rotationally invariant fully nonlinear operators $F$, such as the Pucci extremal operators, there exists a unique  number $\alpha=\alpha^*(F)\in(-1,\infty)$ for which the function $\xi_{\alpha}$ defined by
\begin{equation}\label{eq:radfun}
\xi_\alpha(x) := \left\{ \begin{array}{ll}
|x|^{-\alpha} & \mathrm{if} \ \ \alpha > 0, \\
- \log |x| & \mathrm{if} \ \ \alpha = 0, \\
- |x|^{-\alpha} & \mathrm{if} \ \ \alpha < 0,
\end{array}\right.
\end{equation}
is a smooth solution of $F(D^2\xi_\alpha) = 0$ in $\R^n \setminus \{ 0 \}$. This is easy to verify by a direct computation. For the Pucci maximal  operator $\Pucci$ we have $\alpha^*(\Pucci) = \frac{\Lambda}{\lambda}(n-1)-1$, while the Pucci minimal operator $\pucci$ has scaling exponent $\alpha^*(\pucci) = \frac{\lambda}{\Lambda}(n-1) - 1$.

It is then shown in \cite{CL,FQ} that \EQ{model} has no positive supersolutions in the whole space $\R^n$ if and only if \EQ{critex} holds. The argument used in these papers  relies on the fact  that any solution of $F(D^2u) \ge 0$ in $\R^n$ satisfies $\min_{\partial B_r} u=\min_{\bar B_r} u$ by the maximum principle, so the function  $r\mapsto m(r) = \min_{\partial B_r} u$ is decreasing. Together with the radial symmetry of the fundamental solution, this permits one to prove an analogue of Hadamard's three spheres theorem, which in turn implies that the function $r^{\alpha^*(F)}m(r)$ is increasing. Finally, with the help of a judiciously chosen test function, one obtains  the inequality $m(r)^p\le Cr^2 m(r/2)$, which contradicts properties of $m(r)$. This approach cannot be used if the operator is not rotationally invariant, or if the domain is not $\R^n$.

In addition to being more general, our proof of the corresponding Liouville-type result, Theorem \ref{theomodel}(i), is actually simpler. It has two central ideas. First, we observe that a nontrivial solution of $F(D^2u) \ge 0$ in an exterior domain must be greater than a constant multiple of the fundamental solution, according to the maximum principle. Second, thanks to the scaling properties of \EQ{model}, we will see that in the case $\alpha^*(F) \leq \beta^*(p)$ the existence of a nontrivial solution of \EQ{model} contradicts the finiteness of the first half-eigenvalue of $F$ in an annular domain.

The existence statement (ii) in Theorem \ref{theomodel} is obtained by ``bending" the fundamental solution, that is, by showing that some power of $\Phi$ is a supersolution in $\R^n\setminus\{0\}$ of the inequality we want to solve. Then a supersolution in $\R^n$ is obtained by truncating this function around the origin in a suitable way.

The proof of Theorem \ref{theomodel}(iii) is based on a well-known theorem by Krasnoselskii, which asserts the existence of a nonzero fixed point of any compact map which sends a convex cone into itself, under some conditions on its behavior on two distinct spheres. The precise statement of this theorem is given in Theorem \ref{fixedpt} below. To apply it, we show that an appropriate map can be defined on the cone of nonnegative $(-\beta^*)$-homogeneous functions whenever $\beta^*<\alpha^*$. We verify that its fixed points are solutions of \EQ{model}, and that it satisfies the conditions of Krasnoselskii's theorem under the assumption that \EQ{model} does not have bounded positive solutions in the whole space $\R^n$. On the other hand, if \EQ{model} has a positive solution in $\R^n$, then of course this solution is also a solution in $\R^n \setminus \{ 0 \}$, so we have nothing to prove.

\smallskip

As we noted in the introduction, we can easily extend our results to more general nonlinearities $f(x,u)$.
The next theorem generalizes our statements on supersolutions, that is, Theorem \ref{theomodel} (i) and (ii). We denote
\begin{equation*}
\beta^*=\beta^*(p,\gamma):=\frac{2-\gamma}{p-1},
\end{equation*}
for each $\gamma <2$ and $p > 1$.

\begin{thm}\label{mainthm}
Assume that the operator $F$ satisfies (H1) and (H2), $R_0> 0$,  and $f: (\R^n \setminus B_{R_0}) \times (0,\infty) \to (0,\infty)$ is a continuous function (note $f$ needs not be defined at $u=0$).
 \begin{enumerate}
 \item Suppose there exist $\varepsilon_0, c_0, C_0>0$, $p>1$, $\gamma<2$ such that $\alpha^*(F)\le \beta^*(p,\gamma)$,
 \begin{equation}\label{fx-nonexist1}
 f(x,s)\geq c_0|x|^{-\gamma}s^p\quad\mbox{in}\ (\R^n\setminus B_{R_0})\times(0,\varepsilon_0), \ \mbox{and}
 \end{equation}
 \begin{equation}\label{fx-nonexist2}
\frac{f(x,s)}{s}\le C_0 \frac{f(x,t)}{t} \quad\mbox{for all} \ t>0, \ s\in(0,\min\{t,\varepsilon_0\}), \ |x|\ge R_0.
\end{equation}
Then \EQ{pdiegen} has no positive supersolution in any exterior domain.

 \item Suppose that there exist constants $\varepsilon_0,C_0 >0$, $p>1$ and $\gamma < 2$ such that $\alpha^*(F) > \beta^*(p,\gamma)$ and
\begin{equation}\label{fx-exist}
 f(x,s)\le C_0|x|^{-\gamma}s^p\quad\mbox{in}\ (\R^n\setminus B_{R_0})\times(0,\varepsilon_0).\end{equation}
Then there exists a positive supersolution of \EQ{pdiegen} in $\R^n\setminus B_{R_0}$. If in addition $\gamma \le 0$ and \EQ{fx-exist} holds in $\R^n\times(0,\varepsilon_0)$, then \EQ{pdiegen} has a positive supersolution in the whole space $\R^n$.
\end{enumerate}
\end{thm}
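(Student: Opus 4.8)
The plan is to reduce the general nonlinearity $f(x,u)$ to the model case $|x|^{-\gamma}u^p$ and then port over the two arguments sketched in the discussion for Theorem~\ref{theomodel}(i)--(ii). For part (i), suppose toward a contradiction that $u>0$ is a supersolution of \EQ{pdiegen} in some exterior domain, which we may assume contains $\R^n\setminus B_{R_1}$ for $R_1\ge R_0$. Passing to $\min\{u,\varepsilon_0\}$, which is still a supersolution by the infimum property (and still positive), we may assume $0<u\le\varepsilon_0$; then \EQ{fx-nonexist1} gives $F(D^2u)\ge c_0|x|^{-\gamma}u^p$ in $\R^n\setminus B_{R_1}$, so it suffices to treat the model inequality with weight $|x|^{-\gamma}$. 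The first central idea is that, by the maximum principle together with Theorem~\ref{fundysol}, such a $u$ must dominate a positive multiple of the fundamental solution $\Phi$ on $\R^n\setminus B_{R_2}$ for $R_2$ slightly larger than $R_1$: indeed $F(D^2u)\ge 0$ there, $u$ is bounded below, and comparing $u$ with $a\Phi+b$ on annuli and letting the outer radius go to infinity pins down $u\ge a\Phi$ (modulo an additive constant absorbed by taking $R_2$ large) for some $a>0$. Since $\Phi$ is comparable to $\xi_{\alpha^*}$ up to constants on each dyadic annulus (this is where the homogeneity relation \EQ{scale} is used), we get a lower bound $u(x)\gtrsim |x|^{-\alpha^*}$ for large $|x|$ when $\alpha^*>0$, and a bound bounded below by a positive constant when $\alpha^*\le 0$.

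The second idea exploits scaling. Set $\beta^*=\beta^*(p,\gamma)=(2-\gamma)/(p-1)$ and define, for $\sigma>0$, the rescaling $u_\sigma(x):=\sigma^{\beta^*}u(\sigma x)$, which is again a supersolution of $F(D^2v)\ge c_0|x|^{-\gamma}v^p$ on $\R^n\setminus B_{R_2/\sigma}$ — this is the direct analogue of the rescaling identity stated for \EQ{model}, using the $1$-homogeneity of $F$ and the $\gamma$-homogeneity of the weight. Now restrict to a fixed annulus, say $A:=B_2\setminus \bar B_1$, and consider the functions $u_\sigma$ for $\sigma$ large. On $A$ each $u_\sigma$ is a positive supersolution of $F(D^2v)\ge c_0\, C'\, v^p$ for a constant $C'>0$ depending only on $A$ and $\gamma$, and the lower bound from the previous step gives $u_\sigma\gtrsim \sigma^{\beta^*-\alpha^*}$ on $A$ (when $\alpha^*>0$; when $\alpha^*\le 0$ one gets $u_\sigma\gtrsim\sigma^{\beta^*}$, which is even better). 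Under the hypothesis $\alpha^*\le\beta^*$ this lower bound does not go to zero, so $\inf_A u_\sigma$ stays bounded below by a fixed positive constant as $\sigma\to\infty$. But a positive supersolution of $F(D^2v)\ge c v^p$ on $A$ that is bounded below by a constant $m>0$ forces $F(D^2v)\ge c m^{p-1}v$, i.e. $v$ is a positive supersolution of the linear equation $F(D^2v)-c m^{p-1}v\ge 0$ on $A$; since $v\mapsto v_\sigma$ ranges over such supersolutions with a uniform lower bound while the right-hand side coefficient $cm^{p-1}$ is fixed, this contradicts the finiteness of the first (half-)eigenvalue $\lambda_1^+(F,A)$ — once the coefficient exceeds $\lambda_1^+$, no positive supersolution can exist. (Equivalently, one scales the coefficient up by choosing $m$ large, which is exactly what the non-vanishing of $\inf_A u_\sigma$ provides.) Hypothesis \EQ{fx-nonexist2} is what lets us replace $f(x,u_\sigma)$ by a clean multiple of $u_\sigma$ uniformly in $\sigma$ even though the rescaled functions are not literally $\le\varepsilon_0$; it propagates the sublinear comparison through the rescaling.

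For part (ii), the strategy is to "bend the fundamental solution": look for a supersolution of the form $w=\psi(\Phi)$ with $\psi$ increasing and concave, or more concretely a suitable power $w=(\Phi+k)^{-\theta}$ or $w=c\,\xi_{\alpha^*}^{\,\theta}$ — in the model case $\theta$ is chosen so that $\theta\alpha^*=\beta^*$, which is possible precisely because $\alpha^*>\beta^*$ forces $0<\theta<1$. Using (H1) in the Pucci form and the chain rule for $D^2(\psi\circ\Phi)=\psi'(\Phi)D^2\Phi+\psi''(\Phi)D\Phi\otimes D\Phi$, the term $\psi'(\Phi)D^2\Phi$ contributes $\psi'(\Phi)F(D^2\Phi)=0$ up to a controlled ellipticity error, and the concavity term $\psi''(\Phi)D\Phi\otimes D\Phi\le 0$ pushes $F$ in the favorable direction; matching homogeneities, one checks $F(D^2w)\ge c|x|^{-\gamma}w^p\ge f(x,w)$ for large $|x|$ after using \EQ{fx-exist}, provided the multiplicative constant in $w$ is taken small. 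This yields a positive supersolution on $\R^n\setminus B_{R_0}$. To get one on all of $\R^n$ when $\gamma\le 0$, truncate: take $\min\{w,\,m\}$ for a constant $m$, which is a supersolution away from the truncation sphere by the infimum property; the only issue is at the sphere $\{w=m\}$, and one checks the truncation is consistent there because $w$ is decreasing in $|x|$ near infinity and the constant $m$ is a supersolution wherever $f(x,m)\le 0$ — but $f>0$, so instead one uses that for $\gamma\le 0$ the constant function is a supersolution of $F(D^2v)=0\ge$ nothing; the correct fix is to glue $w$ outside a large ball to a genuine supersolution inside, for instance a large constant that dominates $C_0|x|^{-\gamma}\cdot(\text{const})^p$ on the bounded region, which works exactly because $\gamma\le 0$ keeps $|x|^{-\gamma}$ bounded on bounded sets. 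The main obstacle I expect is part (i): making the lower-bound-times-scaling argument rigorous in the viscosity-solution setting without rotational symmetry — specifically, establishing the comparison $u\ge a\Phi$ on an exterior domain from Theorem~\ref{fundysol} and the maximum principle, and then cleanly phrasing the final contradiction via the first half-eigenvalue of $F$ on an annulus (one must know that $\lambda_1^+(F,A)<\infty$ and that positive supersolutions cease to exist once the zeroth-order coefficient exceeds it). The bending computation in part (ii) is a calculation, delicate in bookkeeping but not conceptually hard.
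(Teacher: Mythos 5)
Your overall plan — dominate $u$ by a multiple of the fundamental solution via Lemma~\ref{lower}, rescale, and contradict finiteness of the first eigenvalue on a fixed annulus; then bend a power of $\Phi$ into a supersolution for existence — is indeed the paper's strategy. But there are two concrete gaps.

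The first and most serious is the critical case $\alpha^*(F)=\beta^*(p,\gamma)>0$. Your lower bound gives $\inf_{B_2\setminus B_1} u_\sigma \gtrsim \sigma^{\beta^*-\alpha^*}$, which in the critical case is merely a \emph{fixed} positive constant $m$ as $\sigma\to\infty$, not a quantity that tends to infinity. The resulting linear inequality $F(D^2 u_\sigma)\ge c\,m^{p-1}u_\sigma$ has a fixed zeroth-order coefficient, and having a positive supersolution of $F(D^2 v)\ge \mu v$ on an annulus is perfectly consistent with $\mu\le\lambda_1^+(F,A)$; there is no contradiction. Your parenthetical ``one scales the coefficient up by choosing $m$ large, which is exactly what the non-vanishing of $\inf_A u_\sigma$ provides'' conflates non-vanishing with unboundedness: when $\alpha^*=\beta^*$ you cannot choose $m$ large, because the lower bound is only $O(1)$. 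The paper closes this gap with a separate argument: it shows that $w=\Phi(x)\log|x|$ satisfies $F(D^2 w)\le C|x|^{-\alpha-2}$, compares $u$ with $cw$ via the maximum principle to upgrade the lower bound to $u\gtrsim |x|^{-\alpha}\log|x|$, and thus obtains $\inf_{B_2\setminus B_1} u_\sigma\gtrsim \log\sigma\to\infty$, which does give the eigenvalue contradiction. You would need to supply this logarithmic improvement (or an equivalent device); without it, part (i) is unproved at the borderline.

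The second gap is the repeated use of constant functions as supersolutions, which fails here because $f>0$: a constant $m$ has $F(D^2 m)=0$, and $0\ge f(x,m)>0$ is impossible, so $\min\{u,\varepsilon_0\}$ in your reduction for (i) and the ``large constant'' you propose gluing to $v=\Phi^\tau$ in (ii) are \emph{not} supersolutions. For (i), the paper avoids truncating $u$ entirely; instead it uses \EQ{fx-nonexist2} together with the bound $u\ge c|x|^{-\alpha}\le\varepsilon_0$ from Lemma~\ref{lower} to compare $f(x,u)/u^p$ with $f(x,c|x|^{-\alpha})/(c|x|^{-\alpha}\,u^{p-1})$ and deduce $f(x,u)\ge c|x|^{-\gamma}u^p$ directly, with no truncation of the unknown. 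For (ii), the inner piece of the gluing must genuinely be a supersolution; the paper constructs it by solving the Dirichlet problem $F(D^2 w)=a$ in $B_1$, $w=0$ on $\partial B_1$, and uses the ABP estimate $0<w\le Ca$ to ensure $F(D^2 w)=a\ge |x|^{-\gamma}w^p$ for $a$ small when $\gamma\le 0$; this $w$, not a constant, is what gets glued to $\Phi^\tau$ via the infimum of supersolutions. Your ``bending'' computation for the exterior piece in part (ii) is essentially the paper's, so once the inner supersolution is fixed the existence argument goes through.
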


\begin{remark}\label{remk} The number $\beta^*(p,\gamma)$ defined in this theorem is the scaling exponent of the equation
\begin{equation} \label{pdie}
F(D^2u) = |x|^{-\gamma} u^p.
\end{equation}
That is, if $u$ is a (sub/super)solution of \EQ{pdie} in $\R^n \setminus B_R$, then the rescaled function
\begin{equation}\label{rescale}
u_\sigma (x):= \sigma^{\beta^*} u(\sigma x), \qquad \beta^*=\frac{2-\gamma}{p-1}\,,
\end{equation}
 is also a (sub/super)solution of \EQ{pdie} in $\R^n \setminus B_{R/\sigma}$.
\end{remark}

\begin{remark}
In the case $f = f(s)$ does not depend on $x$, conditions \EQ{fx-nonexist1} and \EQ{fx-exist} in Theorem \ref{mainthm} suggest that it is the behaviour of $f(s)$ near $s=0$ that determines whether there exist positive supersolutions of \EQ{pdiegen} in exterior domains. While condition \EQ{fx-nonexist2} is a global hypothesis, it is not very restrictive. In fact, hypothesis \EQ{fx-nonexist2} turns out to be necessary only for the method of proof we employ. We have recently discovered another approach which permits us to replace \EQ{fx-nonexist2} by an optimal hypothesis, and yields nonexistence results for sublinear equations ($p<1$) as well as for systems of Lane-Emden type. This alternative method, based on some results from the regularity theory of Krylov and Safonov (c.f. \cite{K}), will be described in a forthcoming article.
\end{remark}

\subsection{Proof of the nonexistence result}
The existence of fundamental solutions of quasilinear equations provides a lower bound on positive supersolutions, as observed for example in \cite[Lemma 2.3] {SZ} and \cite[Proposition 2.6]{BVP}. The following  lemma states that the same is valid for uniformly elliptic operators of Isaacs type. Its simple proof needs only Theorem~\ref{fundysol} and the maximum principle.

\begin{lem}\label{lower} Suppose that $R_1>0$ and $u\in C(\R^n \setminus B_{R_1})$ is a positive solution of  $F(D^2u)\ge 0$ in $\R^n\setminus \bar B_{R_1}$. Then for some $c>0$,
\begin{equation}\label{lowerb}
u(x)\ge c|x|^{-\alpha}\;\;\mbox{ in }\;\R^n\setminus B_{R_1},\qquad \mbox{where}\quad \alpha : = \max\{0,\alpha^*(F)\}.
\end{equation}
\end{lem}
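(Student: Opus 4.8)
The plan is to compare $u$ with suitable rescalings of the fundamental solution $\Phi$ on annular regions, using the maximum principle, and to let the outer radius go to infinity. I would start by fixing $R > R_1$ (say $R = 2R_1$) and working on the annulus $A_\rho := B_\rho \setminus \bar B_R$ for large $\rho$. By Theorem~\ref{fundysol}, $\Phi$ is a solution of $F(D^2\Phi)=0$ in $\Rnpunct$, so $\Phi - b$ is a solution of $F(D^2 v)\le 0$ for any constant $b$ (in fact equality holds), hence a subsolution, while $u$ is a supersolution of the same equation by hypothesis. Since $u$ is continuous and positive on the compact sphere $\partial B_R$, there is $m_0 := \min_{\partial B_R} u > 0$. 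The idea is to choose a constant multiple $c\Phi + b$ that lies below $u$ on both components of $\partial A_\rho$.

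The key step is the choice of constants, and it splits according to the sign of $\anom = \anom(F)$, equivalently according to whether $\alpha = \max\{0,\anom\}$ equals $\anom$ or $0$. \emph{Case $\anom \le 0$ (so $\alpha = 0$):} here I want to show $u \ge c$ for a constant, i.e. $u$ is bounded below away from zero. In this regime $\Phi$ is bounded above on $\Rnpunct \setminus B_1$ (it tends to a finite limit, or to $-\infty$ like $\log|x|$, depending on the normalization in \EQ{scale}), in particular $\Phi$ is bounded above on $\R^n\setminus B_R$; so for a small $c>0$ and a suitable additive constant $b$ the function $c\Phi + b$ is $\le m_0$ on $\partial B_R$ and is $\le 0 < u$ on $\partial B_\rho$ for all large $\rho$. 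The maximum principle on $A_\rho$ then gives $c\Phi + b \le u$ on $A_\rho$; since this holds for all large $\rho$, it holds on $\R^n \setminus B_R$. Taking a further infimum and using that $c\Phi + b$ is bounded below on $\R^n\setminus B_R$ (it is continuous there and has a finite limit / grows only logarithmically, but on the \emph{outside} region it is in fact bounded below as well in the $\anom<0$ case where $\anom\Phi>0$ forces $\Phi<0$, handled symmetrically), we conclude $u \ge c' > 0$ on $\R^n\setminus B_R$; combined with positivity and continuity on the compact shell $\bar B_R\setminus B_{R_1}$, we get $u \ge c|x|^{0} = c$ on all of $\R^n\setminus B_{R_1}$, after shrinking $c$. \emph{Case $\anom > 0$ (so $\alpha = \anom$):} now $\Phi$ is $\anom$-homogeneous and positive, $\Phi(x) = |x|^{-\anom}\Phi(x/|x|)$, and $\Phi \to 0$ at infinity. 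Pick $c>0$ small enough that $c\Phi \le m_0$ on $\partial B_R$; since $u>0$ on $\partial B_\rho$ while $c\Phi\to 0$, for each large $\rho$ we have $c\Phi \le u + \varepsilon_\rho$ on $\partial B_\rho$ with $\varepsilon_\rho \to 0$, so by the maximum principle $c\Phi \le u + \varepsilon_\rho$ on $A_\rho$; letting $\rho\to\infty$ gives $c\Phi \le u$ on $\R^n\setminus B_R$, and since $\Phi(x)$ is comparable to $|x|^{-\anom}$ (its restriction to the unit sphere is continuous and positive, hence bounded below by a positive constant by compactness), we get $u(x) \ge c|x|^{-\anom} = c|x|^{-\alpha}$ there; again we extend to $\R^n\setminus B_{R_1}$ by compactness and shrink $c$.

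I expect the main obstacle to be the bookkeeping of the additive constant $b$ and the behaviour of $\Phi$ at infinity in the $\anom\le 0$ case: one must be careful that in the logarithmic case ($\anom=0$) and the $\anom\in(-1,0)$ case the fundamental solution is bounded \emph{above} on the exterior region $\R^n\setminus B_1$ (this is exactly the defining property in Theorem~\ref{fundysol}: $\Phi$ is bounded above in $\R^n\setminus B_1$), which is what makes the comparison on $\partial B_\rho$ work — whereas near the origin $\Phi$ blows up, but that region is excluded. A secondary technical point is that the maximum principle quoted in Section~\ref{preliminaries} is stated for bounded domains with continuous data up to the boundary, which is why the argument is run on the bounded annuli $A_\rho$ first and the limit $\rho\to\infty$ is taken only afterwards. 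Once these points are in place, everything else is routine.
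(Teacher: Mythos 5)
Your treatment of the case $\anom(F)>0$ is correct and essentially identical to the paper's: you compare $c\Phi$ with $u+\varepsilon_\rho$ on annuli, send $\rho\to\infty$, and conclude $u\ge c\Phi\ge c'|x|^{-\anom}$ using the positive lower bound on $\Phi|_{\partial B_1}$ from compactness.

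The case $\anom(F)\le 0$ has a genuine gap. After running the comparison on the annuli you arrive, correctly, at $u\ge c\Phi+b$ on $\R^n\setminus B_R$ for some \emph{fixed} $c>0$. But you then claim that $c\Phi+b$ is bounded below on $\R^n\setminus B_R$, and this is false. When $\anom=0$, $\Phi(x)=\Phi(x/|x|)-\log|x|\to-\infty$; when $\anom\in(-1,0)$, the normalization $\anom\Phi>0$ forces $\Phi<0$, and $\Phi(x)=|x|^{-\anom}\Phi(x/|x|)\to-\infty$ since $-\anom>0$. (You appear to be thinking of $\Phi$ as merely bounded above on $\R^n\setminus B_1$ — that is the property quoted in Theorem~\ref{fundysol} and it is what makes the boundary comparison on $\partial B_\rho$ succeed — but boundedness above does not give boundedness below, and in fact $\Phi\to-\infty$ at infinity.) So the inequality $u\ge c\Phi+b$ you have obtained is vacuous at infinity and does not yield $u\ge c'>0$. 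The remark about ``handling symmetrically'' the case $\anom<0$ does not rescue this.

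The repair is to make the coefficient on $\Phi$ a free small parameter and send it to zero, exactly as the paper does: set $c_0:=\tfrac12\min_{\partial B_{R_1}}u>0$ and, for each sufficiently small $\varepsilon>0$, verify that $u\ge\varepsilon\Phi+c_0$ on $\partial B_{R_1}$ (needs $\varepsilon\max_{\partial B_{R_1}}\Phi\le c_0$) and on $\partial B_\rho$ for all large $\rho$ (since $\varepsilon\Phi+c_0\to-\infty$ while $u>0$). The maximum principle then gives $u\ge\varepsilon\Phi+c_0$ on $\R^n\setminus B_{R_1}$, and only after that do you let $\varepsilon\to 0$ at each fixed $x$ to obtain $u\ge c_0$. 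Note that this requires both limits: first $\rho\to\infty$ at fixed $\varepsilon$, then $\varepsilon\to0$; fixing a single pair $(c,b)$ cannot work.
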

\begin{proof} We first consider the case $\anom(F)>0$. By Theorem \ref{fundysol}, the fundamental solution $\Phi$ of $F$ is such that $\Phi>0$ and $\Phi(x)\to 0$ as $|x|\to \infty$. Select $c>0$ so small that $u\ge c\Phi$ on $\partial B_{R_1}$. Then for each $\varepsilon >0$, there exists $\bar R= \bar R(\varepsilon) > R_1$ such that $u+\varepsilon\ge \varepsilon\ge\Phi$ in $\R^n\setminus B_{\bar R}$. Applying the maximum principle to
$$
F(D^2(u+\varepsilon))\ge 0= F(D^2\Phi)
$$
in $B_{R}\setminus B_{R_1}$, for each $R>\bar R(\ep)$, we conclude that $u+\varepsilon\ge \Phi $ in $\R^n\setminus B_{R_1}$. Letting $\varepsilon\to 0$ we obtain $u\ge \Phi$ in $\R^n\setminus B_{R_1}$. This implies \EQ{lowerb} by the homogeneity of $\Phi$, since $\Phi(x)= |x|^{-\alpha^*(F)}\Phi(x/|x|)$ for every $x\in \R^n \setminus \{ 0 \}$.

In the case $\anom(F)\le 0$, we have $\Phi(x) \to -\infty$ as $|x|\to \infty$. Setting $$c:=(1/2)\min_{\partial B_{R_1}}u>0,$$ we observe that $u\ge \varepsilon \Phi + c$ on $\partial(B_R\setminus B_{R_1})$, for each $0<\varepsilon<c/(\max_{\partial B_{R_1}} \Phi)$, and each $R>\bar R(\ep)$ sufficiently large. By the maximum principle, we have $u\ge \varepsilon \Phi + c$ in $\R^n\setminus B_{R_1}$. By passing to the limit $\varepsilon\to 0$, we deduce that $u\ge c$.
\end{proof}

\begin{proof}[Proof of \THM{mainthm} (i)]
Assume first that there exists $u>0$ which is a supersolution of \EQ{pdiegen} with $f(x,u)=|x|^{-\gamma} u^p$, that is,
\begin{equation}\label{pdieagain}
F(D^2u ) \geq |x|^{-\gamma} u^p
\end{equation}
in some exterior domain $R^n\setminus B_{R_1}$. We can assume $R_1 > 1$. We will argue that
the number $\beta^* $ defined in \EQ{rescale} is such that $\beta^*<\alpha^*(F)$.

In what follows $c,C$ denote positive constants which may change from line to line, and depend only on $F$ and $n$.

 By Remark \ref{remk}, for each $\sigma\ge R_1$ the function $u_\sigma$ given by \EQ{rescale} is a supersolution of \EQ{pdieagain} in $\R^n \setminus B_{1}$, and hence
 \begin{equation}\label{pneverdie}
 F(D^2u_\sigma)\ge |x|^{-\gamma} u_\sigma^p \ge \min\{1,2^{-\gamma}\}  \, u_\sigma^p\quad\mbox{ in }\; B_2\setminus B_{1}.
 \end{equation}

By applying Lemma \ref{lower} to $u$ and expressing \EQ{lowerb} in terms of $u_\sigma$, we obtain
\begin{equation}\label{usigbel}
u_\sigma(x)=\sigma^{\beta^*}u(\sigma x) \geq c \sigma^{\beta^* - \alpha} |x|^{-\alpha} \quad \mbox{for every} \ \sigma \geq 1 \ \mbox{and} \ |x| \geq R_1/\sigma,
\end{equation}
where $\alpha := \max\{0,\alpha^*(F)\}.$ Thus for every $\sigma \geq R_1,$ we have $u_\sigma(x) \geq c \sigma^{\beta^* - \alpha}$ in $B_2\setminus B_{1}$. Hence by \EQ{pneverdie},
\begin{equation}\label{inqi}
F(D^2 u_\sigma) \geq  c \sigma^{(\beta^*-\alpha)(p-1)}\,u_\sigma \quad \mbox{in} \ B_2\setminus B_{1}.
\end{equation}
The existence of a positive function $u_\sigma$ satisfying this inequality implies that the \emph{first eigenvalue} of $F$ in the bounded regular domain $B_2\setminus B_{1}$ is bounded below by $c \sigma^{(\beta^*-\alpha)(p-1)}$, for every $\sigma \geq R_1$. This is of course a contradiction if $\beta^*> \alpha$ and $\sigma \geq 2R_1$ is taken large enough, since the first eigenvalue $\lambda^+_1(F,B_2\setminus B_1)$ is finite. For a simple proof\footnote{In view of possible applications of this method to other problems, we note that this argument does not depend on the existence of an eigenfunction.} of a finite upper bound on the first eigenvalue, which requires only the maximum principle, see the definition of $\lambda^+_1(F,\Omega)$ as well as Lemma 3.7 in \cite{A}. For more on first eigenvalues of fully nonlinear equations of Isaacs type, see \cite{QS2,A} and the references therein.

\medskip

We have left to rule out the critical case $ \alpha= \alpha^*(F)=\beta^* >0 $. We first show that in this case we can improve \EQ{usigbel}. Define $w(x): = \Phi(x) \left( \log |x| \right)$ and observe that by (H1) and (H2),
\begin{eqnarray*}
F(D^2w)&\le&  \log|x|F(D^2\Phi) +2\Pucci(D\log|x|\otimes D\Phi) + \Phi\Pucci(D^2\log|x|)\\
&=&  2 |x|^{-2} \Pucci\left( x \otimes D\Phi(x) \right) + \Phi(x)\Pucci\left(  |x|^{-2}\iden  - 2|x|^{-4} x\otimes x \right)
\end{eqnarray*}
in $\R^n\setminus B_{R_1}$, provided that $\Phi \in C^2(\R^n\setminus B_{R_1})$. We remark that differentiating the equality ${\Phi}(x) =t^{\alpha} {\Phi}(tx)$ with respect to $x$  yields that the matrix $H(x) = \frac{x\otimes D\Phi(x)}{\Phi(x)}$ satisfies $H(tx) = H(x)$ for all $t>0$. Thus $\Pucci(H(x))\le C=\max_{\partial B_2}\Pucci(H(x))$ in $\R^n\setminus B_1$. The latter is rigorous, since $\Phi$ is locally uniformly bounded in $C^1$, depending only on the constants $\lambda$, $\Lambda$, and $n$. Thus  $\Pucci\left( x \otimes D\Phi(x) \right)\le C \Phi(x)$, and since the eigenvalues of $x\otimes x $ are $0$ and $|x|^2$,  we obtain
\begin{equation}\label{ineqw}
F(D^2w) \leq  C |x|^{-2} \Phi(x)  \leq C |x|^{-\alpha-2} \quad \mbox{in} \ \R^n \setminus B_{R_1}.
\end{equation}
By performing an analogous calculation with a smooth test function, we confirm that the differential inequality \EQ{ineqw} holds rigorously in the viscosity sense, even when $\Phi$ is only in $C^{1,\gamma}_{\mathrm{loc}}$ but not in $C^2$ (see for instance the proof of Lemma 3.3 in \cite{ASS}). According to Lemma \ref{lower} and \EQ{pdieagain}, we also have
\begin{equation}\label{inequ}
F(D^2u) \geq |x|^{-\gamma} u^p \geq c|x|^{-\gamma-p\alpha}=c |x|^{-\alpha-2} \quad \mbox{in} \ \R^n \setminus B_{R_1},
\end{equation}
since $\alpha=\beta^*$. Note that Theorem \ref{fundysol} implies $w=\Phi(x)\log |x|\to 0$ as $|x|\to \infty$, since $\alpha^*=\beta^*>0$.
By \EQ{ineqw}, \EQ{inequ}, and the maximum principle, we infer that
\begin{equation*}
u +\ep \geq c w   \quad \mbox{in} \ B_R \setminus B_{R_1}
\end{equation*}
for every $\ep > 0$ and every $R>\bar R(\ep)$ sufficiently large. Therefore by letting first $R\to \infty$ and then $\ep\to0$ we obtain
\begin{equation*}
u \geq c |x|^{-\alpha} \log |x| \quad \mbox{in} \ \R^n \setminus  B_{R_1},
\end{equation*}
by the definition of $w$ and Theorem \ref{fundysol}. Rescaling, we find that for $\sigma \geq R_1$,
\begin{equation*}
u_\sigma \geq c \log \sigma \quad \mbox{in} \ B_2 \setminus B_1.
\end{equation*}
Thus for all $\sigma \geq R_1$,
\begin{equation*}
F(D^2 u_\sigma) \geq |x|^{-\gamma} u_{\sigma}^p \geq c (\log \sigma)^{p-1} u_\sigma \quad \mbox{in} \ B_2 \setminus B_1.
\end{equation*}
As above, this shows that the first eigenvalue of $F$ in $B_2\setminus B_1$ is bounded below by $c_3 (\log \sigma)^{p-1}$.
Sending $\sigma \to \infty$ yields a contradiction. This completes the proof of Theorem \ref{mainthm}(i) in the case that $f(x,u)=|x|^{-\gamma} u^p$.

Finally, we remark that in the case of general nonlinearity $f$ satisfying the hypotheses of Theorem \ref{mainthm} the problem can be reduced to the particular case we have just studied. Namely, with the help of Lemma \ref{lower} we obtain
\begin{equation}\label{endi}
\frac{f(x,u)}{u^p}\ge \frac{f(x,c|x|^{-\alpha})}{u^{p-1}c|x|^{-\alpha}}\ge \frac{c|x|^{-\gamma -\alpha(p-1)}}{u^{p-1}} = c|x|^{-\gamma}\left( \frac{c|x|^{-\alpha}}{u}\right)^{p-1}\ge c|x|^{-\gamma}
\end{equation}
in $\R^n\setminus B_{R_1}$. The first inequality in \EQ{endi} follows from Lemma \ref{lower} and \EQ{fx-nonexist2} (if necessary, we take larger $R_1$, so that $R_1\ge R_0$ and $c|x|^{-\alpha}\le \varepsilon_0$ in $\R^n\setminus B_{R_1}$), the second inequality is a consequence of \EQ{fx-nonexist1}, and the last inequality follows again from Lemma \ref{lower}. Hence the nonexistence statement in Theorem \ref{mainthm} follows from what we already proved.

\subsection{Proof of \THM{mainthm} (ii)} We suppose $0<(2-\gamma)/(p-1)=\beta^* < \alpha^*(F)$. Define $v:= \Phi^{\tau}$ where $\tau : = \beta^* /\alpha^*(F) \in (0,1)$. In the case $\Phi\in C^2$, we have
\begin{equation*}
D^2v = \tau \Phi^{\tau -1} D^2 \Phi - \tau(1-\tau) \Phi^{\tau -2} \left( D\Phi \otimes D\Phi \right),
\end{equation*}
 so by (H1) and (H2) we obtain
\begin{align*}
F(D^2v) & \geq \tau \Phi^{\tau -1} F(D^2 \Phi) + \tau(1-\tau) \Phi^{\tau-2}\, \pucci \!\left( - D\Phi \otimes D\Phi \right) \\
& = \lambda \tau (1-\tau) \Phi^{\tau -2} |D\Phi|^2
\end{align*}
in $\R^n \setminus \{ 0 \}$. This is routine to confirm in the viscosity sense in the case $\Phi \not\in C^2$.

By differentiating $\Phi(x)=\sigma^\alpha \Phi(\sigma x)$ with respect to $\sigma$, we see that $x\cdot D\Phi = -\alpha \Phi$, hence $|D\Phi| \geq \alpha |x|^{-1} \Phi$.
Therefore,
\begin{equation*}
F(D^2v) \geq c|x|^{-2}\Phi^\tau \geq c |x|^{-\tau \alpha^* - 2} \quad \mbox{in} \ \R^n \setminus \{ 0 \}.
\end{equation*}
Since $0 < c|x|^{-\beta^*}\le v(x) \leq C|x|^{-\beta^*}$ and $-\beta^* p - \gamma = -\tau \alpha^* -2$, we deduce that
\begin{equation}\label{ineqexv}
F(D^2v) \geq c |x|^{-\gamma} v^p \quad \mbox{in} \ \R^n \setminus \{ 0 \}.
\end{equation}

Suppose in addition that $\gamma \leq 0$. For $a > 0$, let $w \in C(\bar B_1)$ be the unique solution of the Dirichlet problem
\begin{equation*}
\left\{ \begin{aligned}
& F(D^2w) = a & \mbox{in} & \ B_1, \\
& w = 0 & \mbox{on} & \ \partial B_1.
\end{aligned} \right.
\end{equation*}
The ABP inequality (\cite[Theorem 3.6]{CC}) provides us the estimate $0 < w \leq Ca$. Thus if $a > 0$ is sufficiently small, the function $w$ satisfies
\begin{equation}\label{ineqexw}
F(D^2w) \geq |x|^{-\gamma} w^p \quad \mbox{in} \ B_1.
\end{equation}
By multiplying $v = \Phi^\tau$ by a small constant, if necessary, we may assume without loss of generality that $v$ satisfies \EQ{ineqexv} and $v < w$ in $B_{1/2} \setminus B_{1/4}$. Since $v(x) \to \infty$ as $|x| \to 0$, there exists $\delta > 0$ such that $w < v$ in $B_{2\delta}$. Now define the function
\begin{equation*}
u(x) := \begin{cases}
w(x) & x\in B_\delta, \\
\min \{ v(x), w(x) \} & x \in B_{1/3} \setminus B_\delta, \\
v(x) & x \in \R^n \setminus B_{1/3}.
\end{cases}
\end{equation*}
By construction, $u$ is a supersolution of \EQ{pdie} in the whole space $\R^n$, since the minimum of two supersolutions is also a supersolution. This completes the proof of Theorem \ref{mainthm} (ii), in the special case $f(x,u)=|x|^{-\gamma} u^p$.

\smallskip

Moreover, inequalities \EQ{ineqexv} and \EQ{ineqexw} continue to hold if we replace $v$ (resp. $w$) by $av$ (resp. $aw$), for  any $a\le 1$.  Since we can find $a$ so small that $av\le \varepsilon_0$ in $\R^n \setminus \{ B_{1/3} \}$ (resp. $aw\le \varepsilon_0$ in $B_1$),  Theorem~\ref{mainthm} (ii) follows from the existence result we just established.
\end{proof}

\subsection{Proof of Theorem \ref{theoexist}}

It is convenient to define the space
\begin{equation*}
X_\alpha := \left\{ u \in C(\R^n \setminus \{ 0 \}) : \ u(x) = \sigma^{\alpha} u(\sigma x) \ \mbox{for every} \ x\in \R^n \setminus \{ 0 \}, \ \sigma > 0\right\}
\end{equation*}
for every $\alpha > 0$, as well as
\begin{equation*}
H_\alpha := \left\{ x \in X_\alpha: u \geq 0 \right\} \quad \mbox{and} \quad H^+_\alpha := \left\{ u \in X_\alpha :  u > 0 \right\}.
\end{equation*}
Observe that $X_\alpha$ is a Banach space under the norm $\| u \|_{X_\alpha} = \max_{\partial B_1} |u|$, and $H_\alpha$ is a closed convex cone in $X_\alpha$ with interior $H^+_\alpha$.

Our argument is based on the following well-known fixed point theorem, due to Krasnoselskii (see \cite{KZ}). It also appears in the appendix of Benjamin \cite{Be}, and is applied to semilinear elliptic equations in \cite{DLN}.

\begin{thm}[Krasnoselskii] \label{fixedpt}
Let $X$ be a Banach space, and $C$ a closed convex cone in $X$ with vertex at the origin. Consider a compact map $A:C\to C$ which satisfies $A(0) = 0$. Suppose there exist $0< \bar r < \bar R$ and $\xi \in C\setminus \{ 0 \}$ such that:
\begin{enumerate}
\item $u \neq t A(u)$ for every $0 \leq t \leq 1$ and $\| u \|_X = \bar r$, and
\item $u \neq A(u) + t\xi$ for every $t \geq 0$ and $\| u \|_X = \bar R$.
\end{enumerate}
Then there exists $u\in C$ satisfying $A(u) = u$ and $\bar r < \| u \|_X < \bar R$.
\end{thm}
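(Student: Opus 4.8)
The plan is to deduce the theorem from the standard properties of the fixed-point index $i_C(\cdot,\cdot)$ for compact maps on the cone $C$ — normalization, additivity/excision, homotopy invariance, and the solution property. Since a closed convex subset of a Banach space is a retract of $X$ (Dugundji extension theorem), this index is available; if one prefers, one may read it off from the Leray–Schauder degree of $I-A\circ r$ on balls of $X$, where $r:X\to C$ is a retraction, the key point being that every solution of the fixed-point equations considered below automatically lies in $C$ (it is a sum of elements of the cone) so that $r$ acts as the identity on it. For $\rho>0$ I write $C_\rho:=\{u\in C:\|u\|_X<\rho\}$, a relatively open neighborhood of $0$ in $C$ whose relative boundary is $\{u\in C:\|u\|_X=\rho\}$.

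First I would compute the index on the small ball and show $i_C(A,C_{\bar r})=1$. Hypothesis (i) says exactly that the compact homotopy $H(t,u):=tA(u)$, $t\in[0,1]$ — which takes values in $C$ because $C$ is a cone — has no fixed point on $\{u\in C:\|u\|_X=\bar r\}$; at $t=0$ it is the constant map with value $0\in C_{\bar r}$, whose index is $1$, so homotopy invariance gives $i_C(A,C_{\bar r})=1$. Next I would show hypothesis (ii) forces $i_C(A,C_{\bar R})=0$. Consider the compact homotopy $G(t,u):=A(u)+t\xi$, $t\ge 0$, again $C$-valued since $\xi\in C$, $t\ge 0$, and $C$ is a convex cone; by (ii) it has no fixed point on $\{u\in C:\|u\|_X=\bar R\}$ for any $t\ge 0$. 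Because $A$ is compact, $M:=\sup\{\|A(u)\|_X:u\in C,\ \|u\|_X\le\bar R\}<\infty$, and if $u=A(u)+t\xi$ with $\|u\|_X\le\bar R$ then $t\|\xi\|_X=\|u-A(u)\|_X\le\bar R+M$, so $t\le T:=(\bar R+M)/\|\xi\|_X$ — here $\xi\neq 0$ is essential. Hence $G(T+1,\cdot)$ has no fixed point at all in $\overline{C_{\bar R}}$, so its index there is $0$, and homotopy invariance over $t\in[0,T+1]$ yields $i_C(A,C_{\bar R})=0$.

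Finally, I would apply additivity to the disjoint relatively open subsets $C_{\bar r}$ and $\Omega:=\{u\in C:\bar r<\|u\|_X<\bar R\}$ of $C_{\bar R}$ — legitimate because, by (i) with $t=1$, $A$ has no fixed point on $\{\|u\|_X=\bar r\}$ — to get
\[
0=i_C(A,C_{\bar R})=i_C(A,C_{\bar r})+i_C(A,\Omega)=1+i_C(A,\Omega),
\]
so $i_C(A,\Omega)=-1\neq 0$, and the solution property produces a fixed point $u\in\Omega$, i.e. $u\in C$, $A(u)=u$, $\bar r<\|u\|_X<\bar R$, as claimed. The argument is routine once the index theory on cones is granted; the one substantive idea is the push-out in the second step — adding a large positive multiple of the fixed direction $\xi$ destroys all fixed points — which is precisely where $\xi\neq 0$ and the boundedness of $A$ on bounded sets enter. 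If a self-contained account were wanted, the only genuine work would be constructing $i_C$ via the Dugundji retraction and the Leray–Schauder degree and checking that the fixed points of the composed maps remain in $C$; but this is classical and I would simply cite it.
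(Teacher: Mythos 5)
The paper does not prove this theorem; it simply states it and cites Krasnoselskii \cite{KZ}, the appendix of Benjamin \cite{Be}, and its use in \cite{DLN}, so there is no ``paper's own proof'' to compare against. Your argument is the standard proof via the fixed-point index on a cone, and it is correct: hypothesis (i) gives, through the compact homotopy $tA(u)$ connecting $A$ to the zero map and the normalization property, that $i_C(A,C_{\bar r})=1$; hypothesis (ii), combined with the a priori bound $t\le (\bar R+M)/\|\xi\|_X$ coming from the boundedness of $A$ on the ball and $\xi\neq 0$, lets you push $A$ off $\overline{C_{\bar R}}$ along the compact homotopy $A(u)+t\xi$ (which stays in $C$ since $C$ is a convex cone) and conclude $i_C(A,C_{\bar R})=0$; additivity over $C_{\bar r}$ and the annulus $\Omega$ then forces $i_C(A,\Omega)=-1$, hence a fixed point with $\bar r<\|u\|_X<\bar R$. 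Two small points you correctly flag but could state more carefully: the cone index is constructed via a retraction $r:X\to C$ and the Leray--Schauder degree of $I-A\circ r$, and the reason this is well defined is that any fixed point of $A\circ r$ lies in $C$ simply because $A$ maps into $C$ (not because it is ``a sum of elements of the cone''); and the compactness of the homotopy $A(u)+t\xi$ uses that $t$ ranges over the compact interval $[0,T+1]$. Neither affects the correctness of the argument, which matches the classical one in the cited sources.
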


In this subsection we set $\beta=\beta^*(p)$, where $p>1$ is fixed such that $\beta < \alpha^*(F)$. We are going to apply Theorem \ref{fixedpt} using the nonlinear map $A:H_\beta \to H_\beta$ which is defined for each $v\in H_\beta$ by $A(v) := u$, where $u \in H_\beta$ is the unique solution of the equation
\begin{equation*}
F(D^2u) = v^p \quad \mbox{in} \ \R^n \setminus \{ 0 \}.
\end{equation*}
That $A$ is well-defined is a consequence of \cite[Lemma 3.8]{ASS}, our assumption that $0<\beta < \alpha^*(F)$, and the relation between $\beta$ and $p$. Precisely, the number $\beta$ has the property that if $\beta p = \beta +2$, and therefore if $v\in H_\beta$, then $v^p \in H_{\beta + 2}$. Now \cite[Lemma 3.8]{ASS} asserts that if $0 < \alpha < \alpha^*(F)$ and $f\in H_{\alpha}$, then the equation
\begin{equation*}
F(D^2u) = f \quad \mbox{in} \ \R^n \setminus \{ 0 \}
\end{equation*}
has a unique solution $u \in H_\alpha$. It follows that $A:H_\beta \to H_\beta$ is well-defined.

Clearly $A(0)=0$, while the strong maximum principle implies that $A(v) \in H^+_\alpha$ for every $v\in H_\alpha \setminus \{ 0 \}$. By the H\"older regularity result which we quoted in the previous section, and since viscosity solutions are stable under local uniform convergence (see for example \cite[Theorem 3.8]{CCKS}), the map $A$ is compact and continuous.

In the next lemma we verify that hypothesis (i) in Theorem \ref{fixedpt} holds for $A$, that is, we find our inner radius $\bar r>0$.

\begin{lem} \label{below}
Suppose that  $u \in H_\beta$, $u\not \equiv0$, is a solution of  $u = tA(u)$ for some $0 \leq t \leq 1$. Then there exists a constant $\bar r>0$, which does not depend on $u$, such that
\begin{equation} \label{estbelow}
\| u \|_{X_\beta} > \bar r.
\end{equation}
\end{lem}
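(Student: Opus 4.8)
The plan is to show that $A$ cannot compress the $X_\beta$-norm of a nontrivial element below a fixed positive threshold $\bar r$. Suppose $u \in H_\beta$, $u \not\equiv 0$, satisfies $u = tA(u)$ for some $t \in (0,1]$ (the case $t = 0$ forces $u \equiv 0$, which is excluded). By definition of $A$, setting $w := A(u)$ we have $F(D^2 w) = u^p$ in $\Rnpunct$, and $u = t w$, so by (H2) (positive homogeneity of $F$) the function $u$ itself satisfies
\[
F(D^2 u) = t F(D^2 w) = t\, u^p \quad \mbox{in } \Rnpunct.
\]
Since $0 \le t \le 1$ and $u \ge 0$, this yields $F(D^2 u) \le u^p$; but more useful is that $u > 0$ in $\Rnpunct$ by the strong maximum principle (as $u \in H^+_\alpha$ once $u \not\equiv 0$), and $u$ is a positive solution of $F(D^2 u) \ge 0$ there.

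Next I would feed $u$ into an eigenvalue/scaling argument of exactly the type used in the proof of Theorem~\ref{mainthm}(i). Because $u \in H_\beta$ is $(-\beta)$-homogeneous, it is scale-invariant in the sense $u_\sigma \equiv u$ for all $\sigma > 0$, so I do not even need to rescale: the relevant estimate on the annulus $B_2 \setminus B_1$ is already uniform. Concretely, on the compact set $\overline{B_2} \setminus B_1$ the positive continuous function $u$ attains a minimum $m := \min_{\overline{B_2}\setminus B_1} u > 0$, and on $\partial B_1$ we have $\max_{\partial B_1} u = \| u\|_{X_\beta}$. By $(-\beta)$-homogeneity these quantities are comparable: $m \ge c_0 \| u \|_{X_\beta}$ for a constant $c_0 = c_0(\beta,n) > 0$ depending only on the ratio $|x|$ ranges over in $B_2\setminus B_1$, since $u(x) = |x|^{-\beta} u(x/|x|)$. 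Then from $F(D^2 u) = t u^p \le u^p$ — wait, I want a lower bound, so instead I use $F(D^2u) = t u^p$ and the relation $u^p \ge m^{p-1} u$ on $B_2 \setminus B_1$ only after recalling $t$ could be small; this is the one place to be careful.

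So the main obstacle, and the step to handle with care, is the factor $t$: if $t$ is allowed to be arbitrarily small the inequality $F(D^2 u) = t u^p \ge (\mbox{something}) \cdot u$ degrades. The remedy is that $t$ drops out of the norm comparison: from $u = t A(u)$ we get $\| u \|_{X_\beta} = t \| A(u) \|_{X_\beta} \le t\, C \| u^p \|_{X_{\beta+2}}^{?}$ — rather, I should use that $A$ is built from solving a linear-growth-bounded problem, so there is an estimate $\| A(v) \|_{X_\beta} \le C \| v \|_{X_\beta}^p$ coming from the ABP/Hölder bounds and the homogeneity (the fundamental solution sets the scale). Hence $\| u \|_{X_\beta} = t \| A(u) \|_{X_\beta} \le t C \| u \|_{X_\beta}^p \le C \| u \|_{X_\beta}^p$, and since $\| u \|_{X_\beta} > 0$ we may divide to obtain $\| u \|_{X_\beta}^{p-1} \ge 1/C$, i.e.
\[
\| u \|_{X_\beta} \ge C^{-1/(p-1)} =: 2\bar r > 0,
\]
which gives the strict inequality \eqref{estbelow} with $\bar r := \tfrac12 C^{-1/(p-1)}$. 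The key inputs are thus: (a) positive homogeneity (H2) to absorb $t$ into $u^p$; (b) the a priori estimate $\| A(v) \|_{X_\beta} \le C\|v\|_{X_\beta}^p$, which follows from the ABP inequality (\cite[Theorem~3.6]{CC}) applied on $B_2\setminus B_1$ together with $(-\beta)$- and $(-\beta-2)$-homogeneity and \cite[Lemma~3.8]{ASS}; and (c) the strong maximum principle to know $u > 0$ so that dividing by $\| u\|_{X_\beta}$ is legitimate. I expect verifying (b) carefully — tracking that the constant $C$ genuinely depends only on $n,\lambda,\Lambda,\beta$ and not on $u$ — to be the real work, but it is routine given the homogeneity reduction to a fixed annulus.
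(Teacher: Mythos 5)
Your overall strategy---writing $\|u\|_{X_\beta}=t\|A(u)\|_{X_\beta}$, bounding $\|A(u)\|_{X_\beta}\le C\|u\|_{X_\beta}^p$, and dividing by $\|u\|_{X_\beta}>0$ (which you correctly secure via the strong maximum principle once $t>0$) to conclude $\|u\|_{X_\beta}\ge C^{-1/(p-1)}$---is sound and, if the operator bound held as stated, would give the lemma. The first half of your proposal (rescaling to a fixed annulus and trying an eigenvalue estimate) is a false start that you rightly abandon because the factor $t$ can be arbitrarily small; the operative argument is only the last paragraph.

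The genuine gap is in your justification of $\|A(v)\|_{X_\beta}\le C\|v\|_{X_\beta}^p$. The ABP inequality (\cite[Theorem~3.6]{CC}) applied on $B_2\setminus B_1$ cannot deliver it: since $A(v)\in H_\beta^+$ with $\beta=\beta^*(p)>0$, the supremum of $A(v)$ over $\bar B_2\setminus B_1$ is attained on $\partial B_1$, which is part of the boundary of the annulus, so ABP estimates that quantity by itself plus $\|v^p\|_{L^n}$ and yields nothing. The bound is true, but the right proof is a comparison on $\R^n\setminus\{0\}$: normalize $\|v\|_{X_\beta}=1$ so that $v^p\le|x|^{-\beta-2}$, take the function $\psi\in H_\beta^+$ from \cite[Lemma~3.8]{ASS} solving $F(D^2\psi)=|x|^{-\beta-2}$, and show $A(v)\le\psi$ by touching a scaled copy $c\psi$, $c\ge 1$, and invoking the strong maximum principle on an annulus together with the homogeneity. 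That comparison against $\psi$ is exactly the ingredient the paper uses, only packaged differently: the paper sets $w=a\psi$ with $a$ small so that $F(D^2w)\ge 2w^p$ and compares $u$ directly with $w$, using the slack between the coefficient $2$ and $t\le 1$ in $F(D^2u)=tu^p$ to reach a contradiction via the strong maximum principle. So your argument can be completed, but not via ABP/H\"older; it needs the same maximum-principle comparison against the homogeneous solution from \cite[Lemma~3.8]{ASS} that drives the paper's proof.
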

\begin{proof}
The equation $u = tA(u)$ means that $u\in H_\beta$ is a solution of the equation
\begin{equation*}
F(D^2u) = tu^p \quad \mbox{in} \ \R^n \setminus \{ 0 \}.
\end{equation*}
Note that $t=0$, that is, $F(D^2u)=0$ in $\R^n \setminus \{ 0 \}$ is excluded by Theorem \ref{fundysol} and our assumptions $u\not\equiv 0$ and $\beta <  \alpha^*(F)$. Hence $u\in H_\beta^+$ by the strong maximum principle.

By \cite[Lemma 3.8]{ASS}  there exists a function $v \in H_\beta^+$ satisfying
\begin{equation*}
F(D^2v) = |x|^{-\beta-2} \quad \mbox{in} \ \R^n \setminus \{ 0 \}.
\end{equation*}
Since $-\beta -2 = -\beta p$, we may set $w: = av$ for some  small $0 < a < 1$ to discover that
\begin{equation*}
F(D^2w) \geq 2 w^p \quad \mbox{in} \ \R^n \setminus \{ 0 \}.
\end{equation*}
Set $\bar r:= \min_{\partial B_1} w$ and suppose that \EQ{estbelow} fails. Then $w \geq u$, and so by multiplying $w$ by a positive constant at most 1, we may assume that $\min_{\partial B_1} (w-u) = 0$, which by the homogeneity of $u$ and $w$ means that $\min_{\R^n \setminus \{ 0 \}} (w-u) = 0$. By the strong maximum principle, we deduce that $w \equiv u$, which is impossible since $t\leq 1$.
\end{proof}

In the next lemma, we find the outer radius $\bar R>0$ in Theorem \ref{fixedpt}. Here we set $$\xi = \xi_\beta= |x|^{-\beta}\in H_\beta^+.$$

\begin{lem} \label{above}
Assume that equation \EQ{model} has no bounded positive solutions. Suppose that  $u \in H_\beta$ satisfies $u = A(u) + t\xi$ for some $t\geq 0$. Then there exists a constant $\bar R>0$, which does not depend on $u$ and $t$, such that
\begin{equation} \label{estabove}
\| u \|_{X_\beta} < \bar R.
\end{equation}
\end{lem}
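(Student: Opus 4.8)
The plan is to argue by contradiction through a blow-up, using the translation invariance of $F$ and the fact that $\beta=\beta^*(p)$ is precisely the scaling exponent of \EQ{model}. Suppose no such $\bar R$ exists; then there are $u_k\in H_\beta$ and $t_k\ge 0$ with $u_k=A(u_k)+t_k\xi$ and $N_k:=\|u_k\|_{X_\beta}=\max_{\partial B_1}u_k\to\infty$. Set $w_k:=A(u_k)=u_k-t_k\xi\in H_\beta$, so that $w_k\ge 0$, $u_k\ge w_k\ge 0$, and $F(D^2 w_k)=u_k^p$ in $\rn\setminus\{0\}$ by the definition of $A$. Since $\xi\equiv 1$ on $\partial B_1$ we get $t_k\le\min_{\partial B_1}u_k\le N_k$, and from the $(-\beta)$-homogeneity of $u_k$ we get $u_k(x)\le N_k|x|^{-\beta}$ for all $x\ne 0$.

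The blow-up: pick $x_k\in\partial B_1$ with $u_k(x_k)=N_k$ and, passing to a subsequence, assume $x_k\to x_\infty$ with $|x_\infty|=1$. Put $\lambda_k:=N_k^{-1/\beta}\to 0$ and define, on $\Omega_k:=\lambda_k^{-1}((\rn\setminus\{0\})-x_k)=\rn\setminus\{-x_k/\lambda_k\}$, the rescalings $\bar u_k(y):=\lambda_k^{\beta}u_k(x_k+\lambda_k y)$ and $\bar w_k(y):=\lambda_k^{\beta}w_k(x_k+\lambda_k y)$. Since $\lambda_k^{\beta+2}F((D^2 w_k)(x_k+\lambda_k y))=\lambda_k^{\,2-(p-1)\beta}\bar u_k(y)^p$ and $2-(p-1)\beta=0$ — this last identity is exactly what defines $\beta^*(p)$ — the homogeneity (H2) of $F$ gives $F(D^2\bar w_k)=\bar u_k^p$ in $\Omega_k$, with no remaining power of $\lambda_k$. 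Moreover $\bar u_k(0)=\lambda_k^{\beta}N_k=1$; the difference is $\bar u_k-\bar w_k=\theta_k|x_k+\lambda_k\cdot|^{-\beta}$ with $\theta_k:=t_k\lambda_k^{\beta}=t_k/N_k\in[0,1]$; and $0\le\bar w_k\le\bar u_k(y)\le|x_k+\lambda_k y|^{-\beta}$. Finally $|x_k/\lambda_k|=N_k^{1/\beta}\to\infty$, so the domains $\Omega_k$ exhaust $\rn$.

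Passing to the limit: on a fixed ball $B_R$ and for $k$ large one has $|x_k+\lambda_k y|\ge 1/2$, hence $0\le\bar w_k\le\bar u_k\le 2^{\beta}$ and $0\le F(D^2\bar w_k)=\bar u_k^p\le 2^{\beta p}$ there. The interior H\"older estimate recalled in Section~\ref{preliminaries}, together with Arzel\`a--Ascoli and a diagonal argument over $R\to\infty$, yields a subsequence with $\bar w_k\to\bar w$ in $C_{\mathrm{loc}}(\rn)$, $\bar w\ge 0$. Passing to a further subsequence so that $\theta_k\to\theta_\infty\in[0,1]$, and using that $|x_k+\lambda_k y|^{-\beta}\to 1$ locally uniformly, we obtain $\bar u_k\to\bar u:=\bar w+\theta_\infty$ and $\bar u_k^p\to\bar u^p$ locally uniformly. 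By the stability of viscosity solutions under local uniform convergence, $F(D^2\bar w)=\bar u^p$ in $\rn$; since $D^2\bar u=D^2\bar w$, this says $F(D^2\bar u)=\bar u^p$ in $\rn$, i.e. $\bar u$ solves \EQ{model} in the whole space.

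To close: $\bar u(0)=\lim_k\bar u_k(0)=1$ and $\bar u(y)\le\lim_k|x_k+\lambda_k y|^{-\beta}=1$, so $0\le\bar u\le 1$ on $\rn$; moreover $F(D^2\bar u)=\bar u^p\ge 0$ and $\bar u(0)=1>0$, so the strong maximum principle forces $\bar u>0$ everywhere. Thus $\bar u$ is a bounded positive solution of \EQ{model} in $\rn$, contradicting the standing hypothesis of the lemma; hence \EQ{estabove} holds for a suitable $\bar R$. I expect the main obstacle to be choosing the right rescaling: within the homogeneous class $H_\beta$ there is no room to rescale amplitudes, so the blow-up must be centered at a maximum point on $\partial B_1$ and must use exactly the exponent $\beta=\beta^*(p)$, which is what makes the limiting equation \EQ{model} itself rather than a degenerate one; a secondary point is that $t_k$ is only controlled by $t_k\le N_k$, so $\theta_k$ need not tend to $0$ and the limit is the translate $\bar u=\bar w+\theta_\infty$, for which one must separately check that it solves \EQ{model} and is bounded and positive.
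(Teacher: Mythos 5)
Your argument is correct and follows essentially the same blow-up strategy as the paper's proof: translate a maximum point of $u_k$ on $\partial B_1$ out to infinity (which, by $(-\beta)$-homogeneity, is exactly the same as your rescaling by $\lambda_k = N_k^{-1/\beta}$ centered at $x_k$), use $u_k(x)\le N_k|x|^{-\beta}$ to get local bounds and hence compactness via the interior H\"older estimates, and pass to the limit to obtain a bounded, positive, entire solution of \EQ{model}, contradicting the standing hypothesis. There are, however, two genuine streamlinings over the published argument that are worth noting. First, the paper devotes a preliminary step to proving a uniform bound $t\le m\le T$ with $T$ independent of $u,t$, by solving $F(D^2v)=\xi^p$ in $H_\beta^+$ and invoking a comparison result from \cite{ASS}; you bypass this entirely, using only the trivial bound $t_k\le\min_{\partial B_1}u_k\le N_k$, so that $\theta_k=t_k/N_k\in[0,1]$. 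Second, the paper absorbs the term $-t_kD^2\xi(y_k+\cdot)$ into the operator, forming perturbed operators $F_k$, and must verify these converge locally uniformly to $F$ (which again uses $t_k\le T$ together with $|y_k|\to\infty$); you instead keep the equation in the clean form $F(D^2\bar w_k)=\bar u_k^p$ and observe that $\bar u_k-\bar w_k=\theta_k|x_k+\lambda_k\cdot|^{-\beta}$ converges locally uniformly to the constant $\theta_\infty$, so that $\bar u=\bar w+\theta_\infty$ has $D^2\bar u=D^2\bar w$ and therefore inherits the limiting equation directly. Both modifications are sound, and the net effect is to remove the dependence of this lemma on the comparison machinery from \cite{ASS}; the rest of your reasoning (the uniform bound $0\le\bar u\le 1$, $\bar u(0)=1$, and the strong maximum principle giving $\bar u>0$) matches the paper exactly.
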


\begin{proof} We can assume that $u\not\equiv 0$.
The equation
\begin{equation} \label{upeq}
u = A(u) + t \xi
\end{equation}
means that the function $w:= u - t\xi$ is in  $H_\beta$ and satisfies the equation
\begin{equation*}
F(D^2w) = u^p \quad \mbox{in} \ \R^n \setminus \{ 0 \}.
\end{equation*}
In particular $w=u- t\xi\ge 0$. By the strong maximum principle  $w\in H_\beta^+$ and hence $u\in H_\beta^+$.

Put $m := \min_{\partial B_1} u>0$. We first claim that there exists $T> 0$, which does not depend on $u$ and $t$, such that $t\le m\leq T$. Notice $u\ge t\xi\ge 0$ implies $t\le m$.

Recall $\beta p = \beta+2$. Let $v\in H_\beta^+$ be the solution of $$ F(D^2 v) = \xi^p.$$ Set $a=\min_{\partial B_1} v>0$ and notice that $a$ depends only on $F$ and $n$. Then we have
$$
F(D^2 w ) = u^p \geq m^p \xi^p = F(m^pv)
$$
in $\R^n \setminus \{ 0 \}$, so that \cite[Proposition 3.2(iii)]{ASS} implies $w \geq m^p v$ in $\R^n \setminus \{ 0 \}$. In particular,
\begin{equation*}
u - t = w \geq a m^p \quad \mbox{on} \ \partial B_1.
\end{equation*}
Thus
\begin{equation*}
m \geq m - t \geq am^p > 0.
\end{equation*}
Hence $t \leq m\le a^{-1/(p-1)}=:T$, and we have the claim.

We now prove the bound \EQ{estabove} by contradiction. If it fails, then for each $k\geq 1$ there exist $0 \leq t_k \leq T$ and $u_k \in H_\beta^+$ such that the function $w_k:= u_k - t_k \xi$ satisfies
\begin{equation*}
F(D^2w_k) = u_k^p\quad \mbox{in} \ \R^n \setminus \{ 0 \},
\end{equation*}
but
\begin{equation*}
\| u_k \|_{X_\beta} = \max_{\partial B_1} u_k\geq k.
\end{equation*}
For each $k$, select $x_k \in \partial B_1$ such that $u_k(x_k) = \| u_k \|_{X_{\beta}}$. Observe that by the  homogeneity of $u$,
\begin{equation} \label{attnuk}
u_k(y_k) = 1, \qquad y_k : =  \| u_k \|_{X_{\beta}}^{1/{\beta}} x_k.
\end{equation}
Moreover,
\begin{equation} \label{bounduk}
u_k \leq 2 \quad \mbox{in} \ B_{R_{k}}(y_k), \quad R_{k}: =  \| u_k \|_{X_{\beta}}^{1/{\beta}} \left( 1 - 2^{-1/{\beta}} \right).
\end{equation}
Define $v_k (z) : = u_k (y_k + z)$, and observe that $v_k$ satisfies $v_k(0) = 1$, $0\leq v_k \leq 2$ in $B_{R_{k}}(0)$, and the equation
\begin{equation} \label{keq}
F(D^2v_k(z) - t_k D^2\xi(y_k+z)) = v_k^p \quad \mbox{in} \ \R^n \setminus \{ -y_k\} \supseteq B_{R_{k}}(0).
\end{equation}
Observe that
\begin{equation*}
D^2\xi(z) = \beta(\beta+2)|z|^{-\beta-4} z\otimes z - \beta |z|^{-\beta-2}\iden.
\end{equation*}
Since $|y_k| \to \infty$ and $0\leq t_k \leq T$, we deduce that $t_k D^2\xi(y_k+z)\rightarrow 0$ locally uniformly in $z\in \R^n$ as $k \to \infty$. We also have $R_{k} \to \infty$ as $k\to \infty$.

We will now argue that we may pass to limits in \EQ{keq} in order to  find a function $v$ satisfying $0 \leq v \leq 2$, $v(0)=1$, and the equation
\begin{equation*}
F(D^2v) = v^p \quad \mbox{in} \ \R^n.
\end{equation*}
First, observe that using \EQ{keq} and (H1), we have that for $|z| \leq R_k$,
\begin{align*}
\pucci(D^2v_k(z)) & \leq F( D^2v_k(z) - t_k D^2\xi(y_k+z)) - F(-t_k D^2\xi(y_k+z)) \\
& \leq (v_k(z))^p + C T |y_k+z|^{-\beta -2} \\
& \leq (v_k(z))^p + CT\left( |y_k| - |z| \right)^{-\beta-2} \\
& \leq 2^p + C T \left( \frac{1}{2} \| u_k \|_{X_\beta} \right)^{-(\beta +2)/\beta}
\end{align*}
using also that $|y_k| = \| u_k \|^{1/\beta}_{X_\beta}$. Similarly, we have
\begin{equation*}
\Pucci(D^2v_k) \geq - C T \left( \frac{1}{2} \| u_k \|_{X_\beta} \right)^{-(\beta +2)/\beta} \quad \mbox{in} \ B_{R_{k}}(0).
\end{equation*}
Applying local H\"older estimates quoted in the previous section and using that $\|u_k\|_{X_\beta} \to \infty$ and $R_k \to \infty$ as $k \to \infty$, we have the bound
\begin{equation*}
\sup_{k\geq 1} \| v_k \|_{C^\alpha(B_R(0))} \leq C(R) < \infty
\end{equation*}
for every $R>0$. Therefore, by passing to a subsequence we may assume that $v_k \rightarrow v \in C(\R^n)$ locally uniformly in $\R^n$. It is clear from \EQ{attnuk} and \EQ{bounduk} that $0 \leq v\leq 2$ and $v(0)=1$. If we define the operator
\begin{equation*}
F_k(M,z) := F( M - t_k D^2\xi(y_k+z) ),
\end{equation*}
then $F_k(M,z) \rightarrow F(M)$ locally uniformly in $\Sy \times \R^n$ as $k\to \infty$, since the matrix $t_k D^2\xi (y_k+z)$ tends to $0$ locally uniformly in $z\in\R^n$. Thus we may use \cite[Theorem 3.8]{CCKS} to pass to limits in \EQ{keq} and obtain
\begin{equation*}
F(D^2v) = v^p \quad \mbox{in} \ \R^n.
\end{equation*}
Since $v(0) = 1$, the strong maximum principle implies that $v> 0$ in $\R^n$. We have obtained our desired contradiction, since we had assumed in our hypothesis that this equation has no bounded positive solutions.
\end{proof}

\begin{proof}[Proof of Theorem \ref{theoexist}]
If equation \EQ{model} has a no bounded positive solution in $\R^n$, then Lemmas \ref{below} and \ref{above} assert that the map $A:H_\beta \to H_\beta$ satisfies the hypotheses of \THM{fixedpt}. Therefore there exists $u\in H_\alpha\setminus \{ 0 \}$ satisfying $A(u) = u$, that is, $u$ is a solution of \EQ{model} in $\R^n \setminus \{ 0 \}$. The strong maximum principle implies $u >0$.
\end{proof}

\bibliographystyle{plain}
\bibliography{critsol}

\end{document}